\let\pa\partial  
\let\na\nabla  
\let\eps\varepsilon  
\newcommand{\N}{{\mathbb N}}  
\newcommand{\R}{{\mathbb R}} 
\newcommand{\diver}{\operatorname{div}}
\newtheorem{theorem}{Theorem}   
\newtheorem{lemma}[theorem]{Lemma}   
\newtheorem{remark}[theorem]{Remark}   
\newtheorem{corollary}[theorem]{Corollary}
\begin{document}  

\title[A Galerkin scheme for Poisson--Maxwell--Stefan systems]{Convergence of an 
implicit Euler Galerkin scheme \\ for Poisson--Maxwell--Stefan systems}

\author[A. J\"ungel]{Ansgar J\"ungel}
\address{Institute for Analysis and Scientific Computing, Vienna University of  
	Technology, Wiedner Hauptstra\ss e 8--10, 1040 Wien, Austria}
\email{juengel@tuwien.ac.at} 

\author[O. Leingang]{Oliver Leingang}
\address{Institute for Analysis and Scientific Computing, Vienna University of  
	Technology, Wiedner Hauptstra\ss e 8--10, 1040 Wien, Austria}
\email{oliver.leingang@tuwien.ac.at} 

\date{\today}

\thanks{The authors acknowledge partial support from   
the Austrian Science Fund (FWF), P27352, P30000, F65, and W1245}

\begin{abstract}
A fully discrete Galerkin scheme for a thermodynamically consistent transient
Max\-well--Stefan system for the mass particle densities, coupled to the 
Poisson equation for the electric potential, is investigated.
The system models the diffusive dynamics of an isothermal ionized fluid mixture
with vanishing barycentric velocity.
The equations are studied in a bounded domain, and different molar masses 
are allowed. The Galerkin scheme preserves the total mass,
the nonnegativity of the particle densities,
their boundedness, and satisfies the second law of thermodynamics 
in the sense that the discrete entropy production is nonnegative.
The existence of solutions to the Galerkin scheme and the convergence of a subsequence
to a solution to the continuous system is proved. Compared to previous works, the 
novelty consists in the treatment of the drift terms involving the electric field.
Numerical experiments show the sensitive dependence of the particle densities
and the equilibration rate on the molar masses.
\end{abstract}

\keywords{Maxwell--Stefan systems, cross diffusion, ionized fluid mixtures,
entropy method, finite-element approximation, Galerkin method, numerical convergence.}  
 
\subjclass[2000]{35K51, 35K55, 82B35}  

\maketitle


\section{Introduction}

The Maxwell--Stefan equations describe the dynamics of a fluid mixture in the
diffusive regime. They have numerous applications, for instance, 
in sedimentation, dialysis, electrolysis, and ion exchange. 
While Maxwell--Stefan models have been
investigated since several decades from a modeling and simulation viewpoint 
in the engineering literature (e.g.\ \cite{GaMa92}),
the mathematical and numerical analysis started more recently \cite{Bot11,GiMa98}.
The global existence of weak solutions under natural conditions was proved in
\cite{ChJu15,JuSt13} for neutral mixtures. In case of ion transport, the
electric charges and the self-consistent electric potential need to be taken into
account. To our knowledge, no mathematical results are available in the
literature for such Poisson--Maxwell--Stefan models. In this paper, we prove the
existence of a weak solution to a structure-preserving fully discrete Galerkin 
scheme and its convergence to the continuous problem. This provides, for the first time,
a global existence result for Poisson--Maxwell--Stefan systems.

\subsection{Model equations}

We consider an ionized fluid mixture consisting of $n$ components with the partial 
mass density $\rho_i$, partial flux $J_i$, and molar mass $M_i$ of the $i$th species.
The evolution of the particle densities $\rho_i$ is governed by the partial mass
balance equations
\begin{equation}\label{1.rhoi}
  \pa_t\rho_i + \diver J_i = r_i(x), \quad i=1,\ldots,N,
\end{equation}
where $r_i$ are the production rates satisfying $\sum_{i=1}^n r_i(x)=0$ and
$\sum_{i=1}^n J_i=0$. 
The molar concentrations are defined by $c_i=\rho_i/M_i$ and $x_i=c_i/c$ are the 
molar fractions, where $c_{\rm tot}=\sum_{i=1}^n c_i$ denotes the total concentration
and we have set $x=(x_1,\ldots,x_n)$.
The partial fluxes $J_i$ and the gradients of the molar fractions $x_i$ are related by 
the (scaled) Maxwell--Stefan equations
\begin{equation}\label{1.Ji}
  -\sum_{j=1}^N k_{ij}(\rho_jJ_i-\rho_iJ_j)
	= D_i := \na x_i + (z_ix_i - (\rho\cdot x)\rho_i)\na\Phi, \quad i=1,\ldots,n.
\end{equation}
where $k_{ij}=k_{ji}$ are the rescaled (reciprocal) Maxwell--Stefan diffusivities,
$D_i$ is the driving force, $z_i$ the electric charge of the $i$th component, 
and $\Phi$ the electric potential. We refer to Section \ref{sec.model} for
details on the modeling.
These equations are coupled to the (scaled) Poisson equation 
\begin{equation}\label{1.phi}
  -\lambda\Delta\Phi = \sum_{i=1}^n z_ic_i + f(y),
\end{equation}
where $\lambda$ is the scaled permittivity and $f(y)$ is a fixed background charge.
The equations are solved in a bounded domain $\Omega\subset\R^d$ ($d\ge 1$)
and supplemented by the boundary conditions
\begin{align}
  & J_i\cdot\nu = 0 \quad\mbox{on }\pa\Omega,\ i=1,\ldots,n, \label{1.bc.Ji} \\
	& \Phi=\Phi_D \quad\mbox{on }\Gamma_{\rm D}, \quad \na\Phi\cdot\nu=0\quad\mbox{on }
	\Gamma_{\rm N}, \label{1.bc.phi}
\end{align}
where $\Gamma_{\rm D}$ models the electric contacts, 
$\Gamma_N=\pa\Omega\backslash\Gamma_{\rm D}$ is the union of insulating 
boundary segments, and $\nu$ denotes the exterior unit normal vector
to $\pa\Omega$. This means that the mixture cannot leave the container $\Omega$
and an electric field is applied at the contacts $\Gamma_{\rm N}$.  
The initial conditions are given by
\begin{equation}\label{1.ic}
  \rho_i(\cdot,0) = \rho_i^0\quad\mbox{in }\Omega, \quad i=1,\ldots,n.
\end{equation}
We assume that the total mass is constant initially, $\sum_{i=1}^n\rho_i^0=1$,
which implies from \eqref{1.rhoi} that the total mass is constant for all
times, $\sum_{i=1}^n\rho_i(t)=1$, expressing total mass conservation.

Observe that \eqref{1.Ji} defines a linear system in the diffusion fluxes.
Since $\sum_{i=1}^n D_i=0$, the kernel of that system is nontrivial, and we
need to invert the relation between the fluxes $J_i$ and the driving forces $D_i$
on the orthogonal component of the kernel. It was shown in \cite[Section 2]{JuSt13}
that we can write \eqref{1.Ji} as $D'=-A_0J'$,
where $D'=(D_1,\ldots,D_{n-1})$, $J'=(J_1,\ldots,J_{n-1})$, 
and $A_0\in\R^{(n-1)\times(n-1)}$ is invertible; see Section \ref{sec.flux}
for details.
The $n$th components are recovered from $D_n=-\sum_{i=1}^{n-1}D_i$ and 
$J_n=-\sum_{i=1}^{n-1}J_i$. Thus, \eqref{1.rhoi} can be written compactly as
the cross-diffusion system \cite{Bot11,JuSt13}
$$
  \pa_t\rho' - \diver(A_0^{-1}D') = r'(x),
$$
where $\rho'=(\rho_1,\ldots,\rho_{n-1})$. However, $A_0^{-1}$ is not positive
definite. To obtain a positive definite diffusion matrix, we need to transform
the system. With the so-called entropy variables
\begin{equation}\label{1.wi}
  w_i = \frac{\log x_i}{M_i} - \frac{\log x_n}{M_n} 
	+ \bigg(\frac{z_i}{M_i}-\frac{z_n}{M_n}\bigg)\Phi, \quad i=1,\ldots,n-1,
\end{equation}
we may formulate \eqref{1.rhoi} as 
\begin{equation}\label{1.B}
  \pa_t\rho' - \diver(B\na w) = r'(x),
\end{equation}
where $B=(B_{ij})\in\R^{(n-1)\times(n-1)}$ is symmetric and positive definite;
see Section \ref{sec.flux} for details. Here, $\rho'$ and $x$ are interpreted
as (invertible) functions of $w$ and $\Phi$. 
This transformation is well known in nonequilibrium thermodynamics,
where $w_i$ is called the electro-chemical potential and $B$ is the mobility or
Onsager matrix. 

The transformation to entropy variables has two important advantages. First,
introducing the entropy
\begin{equation}\label{1.H}
  H(\rho) = \int_\Omega h(\rho)dy, \quad
	h(\rho) = c_{\rm tot}\sum_{i=1}^n x_i\log x_i 
	+ \frac{\lambda}{2}|\na(\Phi-\Phi_D)|^2,
\end{equation}
a formal computation shows that
\begin{equation}\label{1.epi}
  \frac{dH}{dt} + \int_\Omega\na w:B\na w dy
	= \int_\Omega\sum_{i=1}^n r_i(x)\frac{\pa h}{\pa\rho_i}dy,
\end{equation}
if $\Phi_D$ is constant, where $A:B$ denotes the Frobenius matrix product
between matrices $A$ and $B$. 
(A discrete analog is shown in Theorem \ref{thm.ex} below.)
Thus, if the right-hand side is nonpositive, 
the entropy $t\mapsto H(\rho(t))$ is a Lyapunov functional and
we may obtain suitable estimates for $w_i$. The entropy production
(the diffusion term) is nonnegative, which expresses the second law of
thermodynamics.
This technique has been used in \cite{ChJu15,JuSt13} but without electric force terms.
The derivation of gradient estimates is more delicate in the presence of
the electric potential; see Lemma \ref{lem.diff}. Second, the densities
$\rho_i=\rho_i(w)$ are automatically positive and bounded and it holds that
$\sum_{i=1}^n\rho_i(w)=1$; see Corollary \ref{coro.inv}. This property is inherent
of the transformation and it holds without the use of a maximum principle
and independent of the functional setting.

The aim of this paper is to extend the global existence result of 
\cite{ChJu15,JuSt13} to Maxwell--Stefan systems with electric forces and to 
suggest a fully discrete Galerkin scheme that preserves the structure of the
system, namely the nonnegativity of the particle densities, the $L^\infty$
bound $\sum_{i=1}^n\rho_i=1$, and a discrete analog of the entropy production
inequality \eqref{1.epi}.

\subsection{State of the art}

Before presenting our main results, we briefly review the state of the art
of Maxwell--Stefan models. They were already derived in the 19th century
by Maxwell using kinetic gas theory \cite{Max66} and Stefan using 
continuum mechanics \cite{Ste71}. A more mathematical derivation from the
Boltzmann equation can be found in \cite{BGPS13,Gio99}, including a non-isothermal 
setting \cite{HuSa17}. An advantage of the Maxwell--Stefan approach
is that the definition of the driving forces can be adapted to the
present physical situation, leading to very general and thermodynamically
consistent models \cite{BoDr15}. 

When electrolytes are considered, we need to
take into account the electric force. Usually, this is done in the context
of Nernst--Planck models \cite{Ner89,Pla90}, where the diffusion flux $J_i$
only depends on the density gradient of the $i$th component, thus without any 
cross-diffusion effects. Duncan and Toor \cite{DuTo62} showed that cross-diffusion
terms need to be taken into account in a ternary gas. Dreyer et al.\ 
\cite{DGM13} outline some deficiencies of Nernst--Planck models and 
propose thermodynamically consistent Maxwell--Stefan type models. A numerical
comparison between Nernst--Planck and Maxwell--Stefan models can be found in
\cite{PsFa11}.

The first global-in-time existence result to the Maxwell--Stefan equations
\eqref{1.rhoi}-\eqref{1.Ji} without Poisson equation was proved by Giovangigli and
Massot \cite{GiMa98} for initial data around the constant equilibrium state.
The local-in-time existence of classical solutions was shown by Bothe \cite{Bot11}. 
The entropy structure of the Maxwell--Stefan system was revealed in
\cite{JuSt13}, and a general global existence theorem could be shown.
Further global existence results can be found in \cite{HMPW17,MaTe15}.
The Maxwell--Stefan system was coupled to the heat equation \cite{HuSa18}
and to the incompressible Navier--Stokes equations \cite{ChJu15}.
In \cite[Theorem 9.7.4]{Gio99} and \cite[Theorem 4.3]{HMPW17}, 
the large-time asymptotics for initial data close to equilibrium was analyzed.
The convergence to equilibrium for any initial data was investigated in
\cite{ChJu15,JuSt13} without production terms and in \cite{DJT18}
with production terms for reversible reactions. Salvarani and Soares
proved a relaxation limit of the Maxwell--Stefan system to a system of
linear heat equations \cite{SaSo18}.

Surprisingly, there are not many papers concerned with numerical schemes which
preserve the properties of the solution like conservation of total mass,
nonnegativity, and entropy production. Many approximation schemes can be found in
the engineering literature, for instance finite-difference \cite{LeAn10,LVM92}
or finite-element \cite{CaCa08} discretizations. In the mathematical literature,
finite-volume \cite{PDBLGM11} and mixed finite-element \cite{McBo14} schemes 
as well as explicit finite-difference schemes with fast solvers \cite{Gei15}
were proposed.
The existence of discrete solutions was shown in \cite{McBo14}, but only for
ternary systems and under restrictions on the diffusion coefficients.
The schemes of \cite{BGS12,PDBLGM11} conserve the total mass, while those
of \cite{BGS12,DMB15} also preserve the $L^\infty$ bounds. The result of
\cite{DMB15} is based on maximum principle arguments. Note that we are able
to show the $L^\infty$ bounds without the use of a maximum principle, as a
result of the formulation in terms of entropy variables, and that we do
not impose any restrictions on the diffusivities (except positivity).

All the cited results are concerned with the Maxwell--Stefan equations for
neutral fluids, i.e.\ without electric effects. In this paper, we analyze for
the first time Poisson--Maxwell--Stefan systems and show a discrete entropy
production inequality. The cross-diffusion terms cause some mathematical
difficulties which are not present in Nernst--Planck models.

\subsection{Main results}\label{sec.main}

Let $(\theta^{(k)})$ be an orthonormal basis of $H_D^1(\Omega)$  
and $(v^{(k)})$ be an orthonormal basis of $H^1(\Omega;\R^{n-1})$ such that
$v^{(k)}\in L^\infty(\Omega;\R^{n-1})$. 
We introduce the Galerkin spaces
$$
  P_N = \operatorname{span}\{u^{(1)},\ldots,u^{(N)}\}, \quad
	V_N = \operatorname{span}\{v^{(1)},\ldots,v^{(N)}\}.
$$
Furthermore, let $T>0$ and $N\in\N$ and set $\tau=T/N>0$.
We impose the following assumptions:

\begin{labeling}{(A1)}
\item[(A1)] Domain: $\Omega\subset\R^d$ is a bounded domain with Lipschitz boundary
$\pa\Omega=\Gamma_{\rm D}\cup\Gamma_{\rm N}$,
where $\Gamma_{\rm D}\cap\Gamma_{\rm N}=\emptyset$, $\Gamma_{\rm N}$ is open 
in $\pa\Omega$, and $\text{meas}(\Gamma_{\rm D})>0$.

\item[(A2)] Given functions: The initial datum 
$\rho^0=(\rho_1^0,\ldots,\rho_n^0)$ is nonnegative and
measurable satisfying $\int_\Omega\sum_{i=1}^n\rho_i\log\rho_i dy<\infty$, 
$\rho_n^0=1-\sum_{i=1}^{n-1}\rho_i^0\ge 0$. The boundary data $\Phi_D\in H^1(\Omega)
\cap L^\infty(\Omega)$ solves $-\lambda\Delta\Phi_D=f$ in $\Omega$ and
$\na\Phi_D\cdot\nu=0$ on $\Gamma_{\rm N}$. Furthermore, let $f\in L^\infty(\Omega)$.

\item[(A3)] Diffusion matrix: For any given $\rho\in[0,\infty)^n$, 
the transpose of the matrix $A=(A_{ij})\in\R^{n\times n}$, defined by 
\begin{equation}\label{1.A}
  A_{ij} = \left\{\begin{array}{ll}
	\sum_{\ell=1,\,\ell\neq i}^n k_{i\ell}\rho_\ell &\quad\mbox{for }i=j, \\
	-k_{ij}\rho_i &\quad\mbox{for }i\neq j,
	\end{array}\right.
\end{equation}
has the kernel $\operatorname{ker}(A^\top)=\operatorname{span}\{\bm{1}\}$,
where $\bm{1}=(1,\ldots,1)\in\R^n$. 

\item[(A4)] Production rates: The functions $r_i\in C^0([0,1]^n;\R)$
satisfy $\sum_{i=1}^n r_i(x)\log x_i/M_i\le 0$ for all $x\in(0,1]^n$,
$i=1,\ldots,n$.
\end{labeling}

Assumptions (A1) and (A2) are rather natural. 
The condition $\rho_i\log\rho_i\in L^1(\Omega)$
is needed to apply the entropy method. By definition of $A$,
it holds that $\operatorname{ker}(A^\top)
\subset\operatorname{span}\{\bm{1}\}$. If $k_{ij}>0$ (and $\rho_j>0$), a computation
shows that $\operatorname{span}\{\bm{1}\}=\operatorname{ker}(A^\top)$. For the
general case $k_{ij}\ge 0$, this property cannot be guaranteed and needs to
be assumed. This explains Assumption (A3).
Assumption (A4) is needed to derive the entropy production inequality \eqref{1.epi}.
It is satisfied for reversible reactions; see \cite[Lemma 6]{DJT18}.

We consider the implicit Euler Galerkin scheme
\begin{align}
  & \frac{1}{\tau}\int_\Omega\big(\rho'(u^k+w_D,\Phi^k) 
	- \rho'(u^{k-1}+w_D,\Phi^{k-1})\big)\cdot\phi dy 
	+ \eps\int_\Omega u^k\cdot\phi dy \nonumber \\
	&\phantom{xxxx}{}+ \int_\Omega\na\phi:B(u^k+w_D,\Phi^k)\na(u^k+w_D)dy
	= \int_\Omega r'(x(u^k+w_D,\Phi^k))\cdot\phi dy, \label{1.sch1} \\
	& \lambda\int_\Omega\na\Phi^k\cdot\na\theta dy
	= \int_\Omega\bigg(\sum_{i=1}^n z_ic_i(u^k+w_D,\Phi^k) + f(y)\bigg)dy \label{1.sch2}
\end{align}
for $\phi\in V_N$, $\theta\in P_N$, $\eps>0$, and we have defined
\begin{equation}\label{1.wD}
  w_D=(w_{D,1},\ldots,w_{D,n-1}), \quad
	w_{D,i} = \bigg(\frac{z_i}{M_i}-\frac{z_n}{M_n}\bigg)\Phi_D.
\end{equation}
The discrete entropy variables are given by $w^k = u^k + w_D$, 
and we used the notation
$c_i(w^k,\Phi^k)=\rho_i(w^k,\Phi^k)/M_i$, 
$x_i(w^k,\Phi^k)=c_i(w^k,\Phi^k)/c_{\rm tot}^k$ 
for $i=1,\ldots,n$, and $c_{\rm tot}^k=\sum_{i=1}^n\rho_i(w^k,\Phi^k)/M_i$.

At time $k=0$, we assume that $\rho_i^0\ge\eta>0$ in $\Omega$. This allows us to
define $w^0$ via definition \eqref{1.wi}. The condition can be removed by performing
the limit $\eta\to 0$ in the proof; see \cite{ChJu15} for details.
Furthermore, let $\Phi^0\in H^1(\Omega)\cap 
L^\infty(\Omega)$ be the unique solution to 
$$
  -\lambda\Delta\Phi^0 = \sum_{i=1}^n z_i\frac{\rho_i^0}{M_i} + f(y)
	\ \mbox{in }\Omega, \quad \na\Phi^0\cdot\nu=0\ \mbox{on }\Gamma_{\rm N}, \quad
	\Phi^0=\Phi_D\ \mbox{on }\Gamma_{\rm D}.
$$
This defines $(w^0,\Phi^0)$.

\begin{theorem}[Existence for the Galerkin scheme]\label{thm.ex}
Let Assumptions (A1)-(A4) hold.
Then there exists a weak solution $(w^k,\Phi^k)\in V_N\times P_N$ to 
\eqref{1.sch1}-\eqref{1.sch2} with $w^k=u^k+w_D$, satisfying 
\begin{itemize}
\item preservation of $L^\infty$ bounds: $0<\rho_i^k<1$ for $i=1,\ldots,n$;
\item conservation of total mass: $\sum_{i=1}^n\rho_i^k=1$ in $\Omega$;
\item discrete entropy production inequality:
\begin{align}
  H(\rho^k) &+ \tau\int_\Omega\na(w^k-w_D):B(w^k,\Phi^k)\na w^k dy 
	+ \eps\tau\int_\Omega|w^k-w_D|^2 dy \nonumber \\
  &\le \tau\int_\Omega\sum_{i=1}^n\frac{z_i}{M_i} r_i(x^k)(\Phi^k-\Phi_D)dy
	+ H(\rho^{k-1}), \label{1.ei}
\end{align}
where $\rho^k=\rho(w^k,\Phi^k)$. 
\end{itemize}
\end{theorem}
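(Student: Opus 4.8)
The plan is to construct the pair $(u^k,\Phi^k)$ recursively in $k=1,\dots,N$: assuming that $(\rho^{k-1},\Phi^{k-1})$ with the asserted properties (in particular $H(\rho^{k-1})<\infty$) is available — for $k=1$ the initial datum $\rho^0$, which satisfies $\rho_i^0\ge\eta>0$ and $\sum_i\rho_i^0=1$, together with the prescribed $\Phi^0$ — one solves the finite-dimensional system \eqref{1.sch1}--\eqref{1.sch2} by a topological fixed-point argument and then verifies the three properties. The two pointwise statements are built into the transformation: by the invertibility of $w\mapsto\rho(w,\Phi)$ (Corollary \ref{coro.inv}) one has, for \emph{any} pair $(w,\Phi)$, that $0<\rho_i(w,\Phi)<1$ and $\sum_{i=1}^n\rho_i(w,\Phi)=1$, hence $0<\rho_i^k<1$ and $\sum_i\rho_i^k=1$ in $\Omega$; in particular $c_{\rm tot}^k$ is then bounded above and away from $0$, so the coefficient functions $\rho_i,c_i,x_i,B$ are continuous and bounded in $(w,\Phi)$.

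For the existence part I would linearize by freezing the arguments of $\rho',B,x,c_i$ at a pair $(\bar u,\bar\Phi)\in V_N\times(\Phi_D+P_N)$. The $u$-equation then becomes a coercive linear problem on $V_N$ — coercivity of $\phi\mapsto\eps\int_\Omega u\cdot\phi\,dy+\int_\Omega\na\phi:B(\bar u+w_D,\bar\Phi)\na u\,dy$ follows from $B\ge 0$, the $\eps$-term, and the equivalence of norms on the finite-dimensional $V_N$ — and the $\Phi$-equation becomes a coercive linear problem on $\Phi_D+P_N$ by the Poincaré inequality on $H_D^1(\Omega)$ (valid since $\mathrm{meas}(\Gamma_{\rm D})>0$). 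The induced solution operator is continuous and, $V_N$ and $P_N$ being finite-dimensional, compact; combining a homotopy to a constant map with a uniform bound on all possible fixed points then yields a solution of \eqref{1.sch1}--\eqref{1.sch2} by the Leray--Schauder theorem. That bound follows from two estimates: testing \eqref{1.sch2} with $\Phi^k-\Phi_D$ and using $-\lambda\Delta\Phi_D=f$ gives, via Poincaré, $\|\Phi^k\|_{H^1(\Omega)}\le C$ uniformly, while the discrete entropy inequality below yields $\eps\tau\int_\Omega|u^k|^2\,dy\le H(\rho^{k-1})+C\tau$, which on $V_N$ controls $u^k$ in $H^1$.

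The core of the proof is the inequality \eqref{1.ei}. Write $h=h_{\rm mix}+\tfrac\lambda2|\na(\Phi-\Phi_D)|^2$ with $h_{\rm mix}(\rho)=\sum_i(\rho_i/M_i)\log(\rho_i/M_i)-c_{\rm tot}\log c_{\rm tot}$. As a function of $\rho'=(\rho_1,\dots,\rho_{n-1})$ (with $\rho_n=1-\sum_{i<n}\rho_i$), $h_{\rm mix}$ is convex, and a direct computation gives $\pa h_{\rm mix}/\pa\rho_i=\log x_i/M_i-\log x_n/M_n$, which by \eqref{1.wi} and \eqref{1.wD} equals $u_i-(z_i/M_i-z_n/M_n)(\Phi-\Phi_D)$. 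Convexity together with $\sum_{i=1}^n(\rho_i^k-\rho_i^{k-1})=0$ gives
\[
 \int_\Omega\big(h_{\rm mix}(\rho^k)-h_{\rm mix}(\rho^{k-1})\big)\,dy
 \le \int_\Omega\sum_{i=1}^{n-1}(\rho_i^k-\rho_i^{k-1})u_i^k\,dy
 - \int_\Omega(\Phi^k-\Phi_D)\sum_{i=1}^n z_i(c_i^k-c_i^{k-1})\,dy .
\]
Inserting \eqref{1.sch1} with $\phi=u^k=w^k-w_D$ turns the first term on the right into $-\eps\tau\int_\Omega|u^k|^2\,dy-\tau\int_\Omega\na(w^k-w_D):B(w^k,\Phi^k)\na w^k\,dy+\tau\int_\Omega r'(x^k)\cdot u^k\,dy$. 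For the electric part I would use the electrostatic-energy identity $\tfrac\lambda2\|\na(\Phi^j-\Phi_D)\|_{L^2(\Omega)}^2=\tfrac12\int_\Omega(\sum_i z_ic_i^j)(\Phi^j-\Phi_D)\,dy$ for $j\in\{k-1,k\}$, obtained by testing the Poisson equation at step $j$ with $\Phi^j-\Phi_D$ and subtracting the weak form of $-\lambda\Delta\Phi_D=f$. Together with the identity $\int_\Omega(\sum_i z_ic_i^{k-1})(\Phi^k-\Phi_D)\,dy=\lambda\int_\Omega\na(\Phi^{k-1}-\Phi_D)\cdot\na(\Phi^k-\Phi_D)\,dy$, adding these to the difference of electric energies in $H(\rho^k)-H(\rho^{k-1})$ makes the entire electric contribution collapse to $-\tfrac\lambda2\|\na(\Phi^k-\Phi^{k-1})\|_{L^2(\Omega)}^2\le 0$, which is discarded. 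Finally, using $\sum_{i=1}^n r_i=0$ and the definition of $u^k$,
\[
 \int_\Omega r'(x^k)\cdot u^k\,dy
 = \int_\Omega\sum_{i=1}^n r_i(x^k)\,\frac{\log x_i^k}{M_i}\,dy
 + \int_\Omega(\Phi^k-\Phi_D)\sum_{i=1}^n\frac{z_i}{M_i}\,r_i(x^k)\,dy ,
\]
and Assumption (A4) makes the first integral nonpositive. Collecting terms yields \eqref{1.ei}.

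The linearization and the abstract fixed-point argument are routine; the genuinely delicate point — and the advertised novelty over the uncharged case of \cite{ChJu15,JuSt13} — is the treatment of the electric drift. Since $w_D\notin V_N$, one is forced to test \eqref{1.sch1} with $u^k=w^k-w_D$ rather than with $w^k$, which produces the extra $\Phi_D$-dependent terms above; they close \emph{only} because the chemical part of the entropy, the electrostatic energy $\tfrac\lambda2|\na(\Phi-\Phi_D)|^2$ in \eqref{1.H}, and the specific form \eqref{1.wD} of $w_D$ combine — through the symmetry of the discrete Poisson bilinear form — into the nonpositive square $-\tfrac\lambda2\|\na(\Phi^k-\Phi^{k-1})\|_{L^2(\Omega)}^2$. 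A second, milder difficulty is that the right-hand side of \eqref{1.ei} is not sign-definite, so closing the a priori estimate for the fixed-point argument requires the separate $H^1$-bound for $\Phi^k$ to absorb the remaining reaction term $\tau\int_\Omega(\Phi^k-\Phi_D)\sum_i(z_i/M_i)r_i(x^k)\,dy$.
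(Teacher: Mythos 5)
Your proposal is correct and follows essentially the same route as the paper: the $L^\infty$ bounds and total-mass constraint are built into the entropy-variable transformation (Corollary \ref{coro.inv}), existence comes from a Leray--Schauder argument with frozen coefficients, and \eqref{1.ei} is obtained by testing with $u^k=w^k-w_D$, using convexity of the mixing entropy and the discrete Poisson equations at steps $k-1$ and $k$; your collapse of the electric terms to $-\tfrac{\lambda}{2}\|\na(\Phi^k-\Phi^{k-1})\|_{L^2(\Omega)}^2$ is exactly the paper's estimate \eqref{4.aux3}. The only difference is bookkeeping in the fixed-point bound: the paper bounds all fixed points directly from $\eps K_N\|u\|_{H^1(\Omega)}^2\le a(u,u)=\sigma F(u)\le K_F\|u\|_{H^1(\Omega)}$, with $K_F$ uniform because $\rho$, $r'$, $B$, and $\na w_D$ are bounded, which avoids having to rederive a $\sigma$-uniform version of the entropy inequality along the homotopy, as your argument would require.
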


Theorem \ref{thm.ex} is proved by using a fixed-point argument in the entropy variables. 
Using $w^k-w_D$ as a test function in the fully discrete version of \eqref{1.B}, 
we show in Section \ref{sec.ex} that
$$
  H(\rho^k) + \tau K\int_\Omega\sum_{i=1}^n|\na (x_i^k)^{1/2}|^2dy 
	+ \eps\tau\int_\Omega|w^k-w_D|^2dy \le \tau K + H(\rho^{k-1}),
$$
where $K>0$ only depends on the given data. This is an estimated version of
\eqref{1.epi}. The term involving $\eps$ is needed to conclude a uniform
$L^2$ estimate for $w^k$, which is sufficient to apply the Leray-Schauder 
fixed-point theorem in the finite-dimensional Galerkin space. The $\eps$-independent
gradient estimate for $x_i^k$ cannot be used since it does not give an estimate
for $w_i^k$ (see \eqref{1.wi}).
It is possible to analyze system \eqref{1.sch1}-\eqref{1.sch2}
for $\eps=0$ -- see Step 2 of the proof of Theorem \ref{thm.conv} --, but we lose
the information about $w^k$ and obtain a solution in terms of $\rho^k$.
The term involving $\eps$ is technical and not essential for the numerical
simulations (or the structure preservation). 
However, we are not able to prove an existence
result in terms of the entropy variable without such a regularization.

\begin{remark}[Conservation of partial mass]\rm
When $r_i=0$, we have from \eqref{1.rhoi} conservation of the partial mass 
$\|\rho_i\|_{L^1(\Omega)}$. This conservation property does not hold exactly on
the discrete level because of the $\eps$-regularization. It holds that for any
$\delta>0$, there exists $\eps_0>0$ such that for any $0<\eps<\eps_0$
($\eps$ is the value in \eqref{1.sch1}),
\begin{align*}
  \big|\|\rho_i^k\|_{L^1(\Omega)} - \|\rho_i^0\|_{L^1(\Omega)}\big|
	&\le \delta\|\rho_i^0\|_{L^1(\Omega)}, \quad i=1,\ldots,n-1, \\
	\big|\|\rho^k_n\|_{L^1(\Omega)} - \|\rho_n^0\|_{L^1(\Omega)}\big|
	&\le \delta\sum_{i=1}^{n-1}\|\rho_i^0\|_{L^1(\Omega)}.
\end{align*}
The proof is the same as in \cite[Theorem 4.1]{JuSt13}.
As $\delta>0$ can be chosen arbitrarily small, this shows that the numerical
scheme preverses the partial mass approximately.
\qed
\end{remark}

\begin{theorem}[Convergence of the Galerkin solution]\label{thm.conv}\
Let Assumptions (A1)-(A4) hold.
Let $(\rho^k,\Phi^k)$ be a solution to \eqref{1.sch1}-\eqref{1.sch2} and set
$$
  \rho^\tau_i(y,t) = \rho_i^k(y), \quad x_i^\tau(y,t) = x_i^k(y),
	\quad c_i^\tau(y,t) = c_i^k(y), \quad \Phi^\tau(y,t) = \Phi^k(y)
$$
for $y\in\Omega$, $t\in((k-1)\tau,k\tau]$, $i=1,\ldots,n$ and introduce
the shift operator $(\sigma_\tau\rho_i^\tau)(y,t)=\rho_i^{k-1}(y)$ for $y\in\Omega$
and $t\in((k-1)\tau,k\tau]$. Then there exist subsequences (not relabeled) such
that, as $\eps\to 0$, $N\to\infty$, and $\tau\to 0$,
\begin{align*}
  \rho_i^\tau\to\rho_i &\quad\mbox{strongly in }L^p(0,T;L^p(\Omega))\mbox{ for any }
	p<\infty, \\
	x_i^\tau\rightharpoonup x_i,\quad \Phi^\tau\rightharpoonup\Phi 
	&\quad\mbox{weakly in }L^2(0,T;H^1(\Omega)), \\
  \tau^{-1}(\rho_i^\tau-\sigma_\tau(\rho_i^\tau))\rightharpoonup \pa_t\rho
	&\quad\mbox{weakly in }L^2(0,T;H^1(\Omega)'),\ i=1,\ldots,n,
\end{align*}
and the limit $(\rho,\Phi)$ satisfies for all $\phi\in L^2(0,T;H^1(\Omega;\R^{n-1}))$
and $\theta\in H_D^1(\Omega)$,
\begin{align}
  \int_0^T\langle\pa_t\rho',\phi\rangle dt + \int_0^T\int_\Omega\na\phi:A_0^{-1}(\rho)
	D' dydt &= \int_0^T\int_\Omega r'(x)\cdot\phi dydt, \label{1.weak1} \\
  \lambda\int_\Omega\na\Phi\cdot\na\theta dy 
	&= \int_\Omega\bigg(\sum_{i=1}^n z_i\frac{\rho_i}{M_i} 
	+ f(y)\bigg)\theta dy, \label{1.weak2}
\end{align}
where $D_i = \na x_i + (z_ix_i - (z\cdot x)\rho_i)\na\Phi$,
$\rho_i = c_{\rm tot}M_ix_i$, and $c_{\rm tot} = \sum_{i=1}^n\rho_i/M_i$.
Moreover, $\rho_n=1-\sum_{i=1}^{n-1}\rho_i$.
\end{theorem}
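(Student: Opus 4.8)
The proof will be a compactness argument built on the uniform a priori bounds provided by the discrete entropy inequality \eqref{1.ei} of Theorem~\ref{thm.ex}. First I would sum \eqref{1.ei} over $k=1,\dots,m$ and use the coercivity of $B$ from Lemma~\ref{lem.diff} together with the pointwise bounds $0<\rho_i^k<1$. This should give, uniformly in $\eps$, $N$, and $\tau$: $\rho_i^\tau$ bounded in $L^\infty(\Omega\times(0,T))$; $\Phi^\tau$ bounded in $L^\infty(0,T;H^1(\Omega))$ (using the $\frac{\lambda}{2}|\na(\Phi-\Phi_D)|^2$ part of $H$ and (A2)); $(x_i^\tau)^{1/2}$, and hence by Lemma~\ref{lem.diff} also $x_i^\tau$ itself, bounded in $L^2(0,T;H^1(\Omega))$; and $\eps^{1/2}(w^\tau-w_D)$ bounded in $L^2(\Omega\times(0,T))$. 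Since $\rho_i=c_{\rm tot}M_ix_i$ with $0<\min_j(1/M_j)\le c_{\rm tot}\le\max_j(1/M_j)$, the densities $\rho_i^\tau$ are then bounded in $L^2(0,T;H^1(\Omega))$ as well, and the reaction terms are bounded by continuity of $r$ on $[0,1]^n$.

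\textbf{Step 2 (discrete time derivative and compactness).} Next I would test \eqref{1.sch1} with the time integral of the $H^1$-orthogonal projection $\Pi_N\phi\in V_N$ of an arbitrary $\phi\in L^2(0,T;H^1(\Omega;\R^{n-1}))$, using $\|\Pi_N\phi\|_{H^1}\le\|\phi\|_{H^1}$; estimating the flux term by the $L^2(H^1)$ bound on $x^\tau$, the $L^\infty(H^1)$ bound on $\Phi^\tau$ and the uniform boundedness of $A_0^{-1}$ on $[0,1]^n$ (guaranteed by (A3)), the $\eps$-term by $\eps|w^\tau-w_D|\le\eps^{1/2}\cdot\eps^{1/2}|w^\tau-w_D|$, and the reaction term trivially, this yields a uniform bound for $\tau^{-1}(\rho_i^\tau-\sigma_\tau(\rho_i^\tau))$ in $L^2(0,T;H^1(\Omega)')$. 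Combining the $L^2(0,T;H^1(\Omega))$ bound on $\rho_i^\tau$ with this bound on the discrete time derivative, a discrete Aubin--Lions--Simon lemma (applied as in \cite{JuSt13,ChJu15}) gives, up to a subsequence, $\rho_i^\tau\to\rho_i$ strongly in $L^2(\Omega\times(0,T))$ and, by interpolation with the $L^\infty$ bound, in $L^p(\Omega\times(0,T))$ for every $p<\infty$. Extracting further, $\rho_i^\tau\to\rho_i$ a.e., hence $c_{\rm tot}^\tau\to c_{\rm tot}$ (with the same two-sided bounds) and $x_i^\tau=\rho_i^\tau/(M_ic_{\rm tot}^\tau)\to x_i$ a.e.\ and in every $L^p$, and $\sum_i\rho_i=1$, $\rho_n=1-\sum_{i<n}\rho_i$. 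The weak $L^2(0,T;H^1)$ bounds then force $x_i^\tau\rightharpoonup x_i$ and $\Phi^\tau\rightharpoonup\Phi$ weakly in $L^2(0,T;H^1(\Omega))$, and $\tau^{-1}(\rho_i^\tau-\sigma_\tau(\rho_i^\tau))\rightharpoonup\pa_t\rho_i$ weakly in $L^2(0,T;H^1(\Omega)')$. Equivalently one may first carry out $\eps\to 0$ at fixed $N,\tau$, which drops the $w$-formulation and leaves the $\rho$-formulation with flux $A_0^{-1}D'$ tested against $V_N$, and then let $N\to\infty$ and $\tau\to 0$.

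\textbf{Step 3 (passage to the limit).} For $\phi\in C^1([0,T];H^1(\Omega;\R^{n-1}))$ I would test \eqref{1.sch1} with $\Pi_N\phi$, which converges to $\phi$ strongly in $L^2(0,T;H^1(\Omega))$ because $(v^{(k)})$ is an orthonormal basis. The discrete-time-difference term passes to the limit by Step 2; the reaction term by dominated convergence since $x^\tau\to x$ a.e.\ in $[0,1]^n$ and $r$ is continuous; and the $\eps$-term vanishes because $\|\eps(w^\tau-w_D)\|_{L^1}\le\eps^{1/2}\|\eps^{1/2}(w^\tau-w_D)\|_{L^2}(|\Omega|T)^{1/2}\to 0$ while $\Pi_N\phi$ stays bounded in $L^\infty$. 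In the flux term $\int_0^T\int_\Omega\na\Pi_N\phi:A_0^{-1}(\rho^\tau)D'^\tau$ with $D_i^\tau=\na x_i^\tau+(z_ix_i^\tau-(z\!\cdot x^\tau)\rho_i^\tau)\na\Phi^\tau$, one has $\na x_i^\tau\rightharpoonup\na x_i$ weakly in $L^2$; the scalar coefficient $z_ix_i^\tau-(z\!\cdot x^\tau)\rho_i^\tau$ is bounded in $L^\infty$ and converges strongly in every $L^p$, so (writing it as its limit plus a vanishing error) $(z_ix_i^\tau-(z\!\cdot x^\tau)\rho_i^\tau)\na\Phi^\tau\rightharpoonup(z_ix_i-(z\!\cdot x)\rho_i)\na\Phi$ weakly in $L^2$, whence $D_i^\tau\rightharpoonup D_i$ weakly in $L^2(\Omega\times(0,T))$; and $A_0^{-1}(\rho^\tau)\to A_0^{-1}(\rho)$ a.e.\ and boundedly, so $A_0^{-1}(\rho^\tau)D'^\tau\rightharpoonup A_0^{-1}(\rho)D'$ weakly in $L^2$. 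Pairing with $\na\Pi_N\phi\to\na\phi$ strongly in $L^2$ gives the flux term of \eqref{1.weak1}, and density extends the identity to all $\phi\in L^2(0,T;H^1(\Omega;\R^{n-1}))$. For the Poisson equation I would test \eqref{1.sch2}, multiplied by $\beta\in C_c^\infty(0,T)$, with the projection of $\theta\in H_D^1(\Omega)$ onto $P_N$; since $\sum_iz_ic_i^\tau+f\to\sum_iz_ic_i+f$ strongly in $L^2(\Omega\times(0,T))$, this yields \eqref{1.weak2} for a.e.\ $t$. Finally the initial datum is recovered in the standard way, choosing $\phi$ with $\phi(\cdot,T)=0$, summing the time-difference term by parts, and using the convergence of $\rho^0$ at $k=0$.

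\textbf{Main obstacle.} The decisive points are, first, the $\eps$-, $N$-, $\tau$-uniform $L^2(0,T;H^1)$ estimate for $x_i^\tau$ (equivalently for $\rho_i^\tau$): the entropy production controls only $\na(x_i^\tau)^{1/2}$ and the combined electric contribution $\na w^\tau$, so extracting a genuine spatial gradient bound for $x_i^\tau$ needs the delicate argument of Lemma~\ref{lem.diff}, which is the new ingredient caused by the electric field; and second, given this bound, the nontrivial step in the limit is the identification of the drift part of the flux, $A_0^{-1}(\rho^\tau)(z_ix_i^\tau-(z\!\cdot x^\tau)\rho_i^\tau)\na\Phi^\tau$, as the product of the \emph{weakly} convergent gradient $\na\Phi^\tau$ with nonlinear coefficients — which is exactly where the strong convergence of $(\rho_i^\tau)$ from the discrete Aubin--Lions lemma is indispensable. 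Since all a priori bounds are uniform in $(\eps,N,\tau)$, the three limits can be taken simultaneously along a diagonal subsequence.
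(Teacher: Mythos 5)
Your Steps 1 and 3 match the paper's strategy (entropy-based uniform bounds via Lemma \ref{lem.diff}, then weak--strong limit passage in the flux, with strong convergence of $\rho^\tau$ handling $A_0^{-1}(\rho^\tau)$ and the drift coefficients), but your Step 2, as primarily presented, has a genuine gap. Testing \eqref{1.sch1} with $\Pi_N\phi$ only bounds the action of $\tau^{-1}(\rho^\tau-\sigma_\tau\rho^\tau)$ on $V_N$, i.e.\ it gives an estimate in the dual of the Galerkin space, not in $L^2(0,T;H^1(\Omega)')$. For an arbitrary $\phi\in H^1(\Omega;\R^{n-1})$ you must also control
$\int_\Omega \tau^{-1}(\rho^\tau-\sigma_\tau\rho^\tau)\cdot(\phi-\Pi_N\phi)\,dy$,
which is not governed by the equation; the only available bound is of order $\tau^{-1}\|\phi-\Pi_N\phi\|_{L^2(\Omega)}$, and for a general orthonormal basis the projection error carries no rate in $N$. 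Hence the bound is not uniform when $\tau\to 0$ and $N\to\infty$ independently, and your concluding claim that ``the three limits can be taken simultaneously along a diagonal subsequence'' is not justified: without the $H^1(\Omega)'$ bound you cannot invoke the discrete Aubin--Lions lemma, and without the resulting strong convergence of $\rho^\tau$ you cannot pass to the limit in the nonlinearities $A_0^{-1}(\rho^\tau)$ and $z_ix_i^\tau-(z\cdot x^\tau)\rho_i^\tau$.

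The fix is exactly the sequential route you mention in passing, which is what the paper does: first let $\eps\to 0$ and then $N\to\infty$ at \emph{fixed} $\tau$ (and fixed time step $k$), where no time compactness is needed at all --- the uniform $H^1(\Omega)$ bounds for $x_i$, $\rho_i$, $\Phi$ and the compact embedding $H^1(\Omega)\hookrightarrow L^2(\Omega)$ already give strong convergence at each of the finitely many time steps, so the nonlinear terms pass to the limit there. After $N\to\infty$, density of $\bigcup_N V_N$ in $H^1(\Omega;\R^{n-1})$ makes the semi-discrete equation valid for all $H^1$ test functions, and only then is the estimate of $\tau^{-1}(\rho^\tau-\sigma_\tau\rho^\tau)$ in $L^2(0,T;H^1(\Omega)')$ legitimate; the Dreher--J\"ungel version of the Aubin--Lions lemma is applied solely in the final limit $\tau\to 0$. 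With the limits ordered in this way (or with a quantitative coupling of $N$ and $\tau$, which your abstract Galerkin setting does not provide), the rest of your argument --- the $L^\infty$ interpolation to $L^p$, the identification $D_i^\tau\rightharpoonup D_i$ in $L^2$, the treatment of the $\eps$-term, and the Poisson equation --- goes through as in the paper.
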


In Theorem \ref{thm.conv}, $\langle\cdot,\cdot\rangle$ denotes the duality bracket
between $H^1(\Omega;\R^{n-1})'$ and $H^1(\Omega;\R^{n-1})$.
The difficult part of the proof is the estimate
of the diffusion term because of the contribution of the electric field.
We show in Lemma \ref{lem.diff} that
$$
  \int_\Omega\na w^k:B\na w^k dy 
	\ge K\int_\Omega\sum_{i=1}^n M_i^{1/2}\frac{|D_i^k|^2}{x_i^k}dy
	\ge K_1\int_\Omega\sum_{i=1}^n|\na(x_i^k)^{1/2}|^2 dy - K_2
$$
holds for some constants $K$, $K_1$, $K_2>0$, which are independent of $\eps$, $N$,
and $\tau$. Then the uniform $L^\infty$ bound for $x_i^k$ gives a uniform
$H^1(\Omega)$ bound for $x_i^k$ and consequently for $\rho_i^k$. Weak compactness
allows us to pass to the limits $\eps\to 0$ and $N\to\infty$, and the
limit $\tau\to 0$ is performed by means of the Aubin-Lions lemma.

The paper is organized as follows. In Section \ref{sec.model}, we detail the
thermodynamic modeling of system \eqref{1.rhoi}-\eqref{1.phi}. Some
auxiliary results on the formulation of the fluxes $J_i$ and the inversion of
the map $\rho\mapsto w$ are presented in Section \ref{sec.aux}. 
Sections \ref{sec.ex} and \ref{sec.conv} are devoted to the proof of the main 
theorems. Finally, some numerical experiments are shown in Section \ref{sec.numer}.


\section{Modeling}\label{sec.model}

We consider an isothermal electrolytic mixture of $n$ fluid components 
in the bounded domain $\Omega\subset\R^d$ ($d\ge 1$) with boundary $\pa\Omega$. 
We assume that the
mixture is not moving, so the barycentric velocity vanishes. The thermodynamic
state of the mixture is described by the partial mass densities 
$\rho_1,\ldots,\rho_n$ and the electric field $E$. We suppose the
quasi-static approximation $E=-\na\Phi$, where $\Phi$ is the electric potential. 
The evolution of the mass densities $\rho_i=M_ic_i$ with the molar masses $M_i$ and
molar concentrations (or number densities) $c_i$
is governed by the partial mass balances \cite[(4)]{DDGG17}
$$
  \pa_t\rho_i + \diver J_i = r_i(x)\quad\mbox{in }\Omega,\ t>0,\ i=1,\ldots,n,
$$
where $x=(x_1,\ldots,x_n)$ is the vector of molar fractions 
$x_i=\rho_i/(c_{\rm tot}M_i)$, $c_{\rm tot}=\sum_{i=1}^n c_i$ is the 
total concentration, $J_i$ the diffusion 
flux, and $r_i(x)$ the mass production rate of the $i$th species. 
We assume that the total flux and the total production vanishes,
$$
  \sum_{i=1}^n J_i=0, \quad \sum_{i=1}^n r_i(x) = 0,
$$
which are necessary constraints to achieve total mass conservation,
$\pa_t\sum_{i=1}^n\rho_i=0$. We suppose that the total initial mass is constant
in space, $\sum_{i=1}^n\rho_i^0=\rho_{\rm tot}>0$, 
which implies that the total mass is constant in space and time,
$\sum_{i=1}^n\rho_i(t)=\rho_{\rm tot}$ for $t>0$. 

The electric potential $\Phi$ is given by the Poisson equation
\cite[(3) and (25)]{DGM13}
$$
  -\eps_0(1+\chi)\Delta\Phi = F\sum_{i=1}^n z_ic_i + f(y)\quad\mbox{in }\Omega,
$$
where $\eps_0$ is the dielectric constant, $\chi$ the dielectric susceptibility, 
$F$ the Faraday constant, $z_i$ the charge number of the $i$th species, 
and $f(y)$ with $y\in\Omega$ models the charge of fixed background ions. 

The basic assumption of the Maxwell--Stefan theory is that the difference
in speed and molar fractions leads to a diffusion flux. 
They are implicitly given by the driving forces $d_i$ according to 
\cite[(200)]{BoDr15}
$$
  -\sum_{j=1}^n\frac{x_j(J_i/M_i)-x_i(J_j/M_j)}{c_{\rm tot}D_{ij}} 
	= d_i, \quad i=1,\ldots,n,
$$
where the numbers
$D_{ij}=D_{ji}$ are the Maxwell--Stefan diffusivities. Inserting the definition
$x_i=\rho_i/(c_{\rm tot}M_i)$, we find that
\begin{equation}\label{2.MSE}
  -\sum_{j=1}^n\frac{\rho_jJ_i-\rho_iJ_j}{c_{\rm tot}^2M_iM_jD_{ij}} = d_i.
\end{equation}
In the present situation, the driving force is given by two components,
the variation of the chemical potential $\mu_i$ and the contribution of the
body forces $b_i$ \cite[(211)]{BoDr15}:
$$
  d_i = \frac{c_iM_i}{RT}\na\mu_i - \frac{\rho_i}{RT}(b_i-b_{\rm tot}), 
	\quad i=1,\ldots,n,
$$
where $R$ is the gas constant and $T$ the (constant) temperature.
Since $(D_{ij})$ is symmetric, summing \eqref{2.MSE} from $i=1,\ldots,n$ leads to
$\sum_{i=1}^n d_i=0$. Furthermore, $\sum_{i=1}^n\na\mu_i$ vanishes too; see below.
This shows that $b_{\rm tot}=\rho_{\rm tot}^{-1}\sum_{i=1}^n \rho_ib_i$. 
We assume that the only force is
due to the electric field (i.e., we neglect effects of gravity),
$b_i=-(z_i/M_i)F\na\Phi$ \cite[(3)]{PsFa11}.

It remains to determine the chemical potential. We define it by 
$\mu_i=\pa h_{\rm mix}/\pa\rho_i$, where 
$h_{\rm mix}(\rho) = c_{\rm tot}RT(\sum_{i=1}^n x_i\log x_i+1)$ is the 
mixing free energy density \cite[(23)]{DDGG17}. Then
$$
  \mu_i = \frac{1}{c_{\rm tot}M_i}\frac{\pa h_{\rm mix}}{\pa x_i} 
	= \frac{RT}{M_i}(\log x_i+1),
$$
and the driving force becomes
\begin{align}
  d_i &= c_i\na\log x_i + \frac{\rho_i F}{RTM_i}
	\bigg(z_i - \frac{1}{\rho_{\rm tot}}\sum_{j=1}^n\frac{z_j\rho_j}{M_j}\bigg)\na\Phi 
	\nonumber \\
	&= c_{\rm tot}\bigg(\na x_i + \frac{F}{RT}\bigg(z_ix_i - (z\cdot x)
	\frac{\rho_i}{\rho_{\rm tot}}\bigg)\na\Phi\bigg), \label{2.di}
\end{align}
where $z=(z_1,\ldots,z_n)$ and $x=(x_1,\ldots,x_n)$. The Gibbs-Duhem equation
$$
  \sum_{i=1}^n\rho_i\frac{\pa h_{\rm mix}}{\pa\rho_i} - h_{\rm mix}(\rho)
	= RT\sum_{i=1}^n\rho_i\frac{\log x_i+1}{M_i} 
	- c_{\rm tot}RT\bigg(\sum_{i=1}^n x_i\log x_i+1\bigg) = 0
$$
shows that the pressure vanishes, which is consistent with our choice of
the driving force (see \cite[(211)]{BoDr15}). 
The driving force in \cite[(7)]{PsFa11} contains a non-vanishing pressure
that is related to our expression for the total body force. The resulting
driving force \eqref{2.di}, however, is the same.

We summarize the model equations:
\begin{align}
  \pa_t\rho_i + \diver J_i &= r_i(x), \quad i=1,\ldots,n, \label{2.rhoi} \\
	-\eps_0(1+\chi)\Delta\Phi &= F\sum_{i=1}^n z_ic_i + f(y), \label{2.phi} \\
  -\sum_{j=1}^n\frac{\rho_jJ_i-\rho_iJ_j}{c_{\rm tot}^3M_iM_jD_{ij}}
	&= \frac{d_i}{c_{\rm tot}} 
	= \na x_i + \frac{F}{RT}\bigg(z_ix_i - (z\cdot x)\frac{\rho_i}{\rho_{\rm tot}}
	\bigg)\na\Phi, \label{2.Ji}
\end{align}
and the relations
$$
  c_i = \frac{\rho_i}{M_i}, \quad x_i = \frac{\rho_i}{c_{\rm tot}M_i}, \quad
	c_{\rm tot} = \sum_{i=1}^n c_i.
$$
Equations \eqref{1.rhoi}-\eqref{1.phi} 
are obtained from \eqref{2.rhoi}-\eqref{2.Ji} after
setting $\lambda=\eps_0(1+\chi)/F$, $k_{ij}=1/(c_{\rm tot}^3 M_iM_jD_{ij})$,
and $D_i=d_i/c_{\rm tot}$ and after nondimensionalization.
In particular, we scale the particle densities by $\rho_{\rm tot}$
(then the scaled quantities satisfy $\sum_{i=1}^n\rho_i=1$) and 
the electric potential by $F/(RT)$. 


\section{Auxiliary results}\label{sec.aux}

We collect some auxiliary results needed for the existence analysis.
The starting point is the relation \eqref{1.Ji} below.
Observe that the coefficients $k_{ij}$ depend on $\rho_i$ via $c_{\rm tot}
=\sum_{i=1}^n\rho_i/M_i$. This dependency does not complicates the analysis
since the results in Section \ref{sec.aux} hold pointwise for any given $\rho_i$
and $c_{\rm tot}$ is uniformly bounded from above and below by
$$
  \frac{1}{\max_{i=1,\ldots,n}M_i} \le c_{\rm tot} 
	= \sum_{i=1}^n\frac{\rho_i}{M_i} \le \frac{1}{\min_{i=1,\ldots,n}M_i}.
$$

\subsection{Expressions for the diffusion fluxes}\label{sec.flux}

We review three different expressions for the diffusion fluxes following
\cite{ChJu15,JuSt13} and extend the formulas to electro-chemical potentials. 
We reformulate \eqref{1.Ji}:
\begin{equation}\label{3.Di}
  D_i = -\sum_{j\neq i}k_{ij}(\rho_jJ_i-\rho_iJ_j)
	= \sum_{j\neq i}k_{ij}\rho_i\rho_j\bigg(\frac{J_i}{\rho_i}-\frac{J_j}{\rho_j}\bigg).
\end{equation}
The symmetry of $(k_{ij})$ implies that $\sum_{i=1}^n D_i=0$.
Compactly, we may write $D=-AJ$, where $D=(D_1,\ldots,D_n)^\top$,
$J=(J_1,\ldots,J_n)^\top$, and $A=(A_{ij})$ with
\begin{equation}\label{2.A}
  A_{ij} = \left\{\begin{array}{ll}
	\sum_{\ell=1,\,\ell\neq i}^n k_{i\ell}\rho_\ell &\quad\mbox{for }i=j, \\
	-k_{ij}\rho_i &\quad\mbox{for }i\neq j.
	\end{array}\right.
\end{equation}
By Assumption (A3), it holds that $\text{im}(A)=\text{ker}(A^\top)^\perp
=\text{span}\{\bm{1}\}^\perp$, where $\bm{1}=(1,\ldots,1)^\top$ $\in\R^n$. 
We conclude from \cite[Lemma 2.2]{JuSt13} that all eigenvalues of
$\widetilde A:=A|_{\text{im}(A)}$ are positive uniformly in $\rho\in[0,1]^n$
and that $\widetilde A$ is invertible.
Since $\sum_{i=1}^nJ_i=0$, each row of $J=(J_1,\ldots,J_n)$ is an element
of $\text{im}(A)$, so the linear system $D=-\widetilde AJ$ can be inverted,
yielding $J=-\widetilde A^{-1}D$. 

We obtain another formulation by inverting the system in the first $n-1$ variables.
Setting $D'=(D_1,\ldots,D_{n-1})$ and $J'=(J_1,\ldots,J_{n-1})$, we can write
$D'=-A_0J'$, where the matrix $A_0=(A_{ij}^0)\in\R^{(n-1)\times(n-1)}$ is
defined by
$$
  A_{ij}^0 = \left\{\begin{array}{ll}
	\sum_{\ell=1,\,\ell\neq i}^{n-1}(k_{i\ell}-k_{in})\rho_\ell + k_{in}
	&\quad\mbox{if }i=j, \\
	-(k_{ij}-k_{in})\rho_i &\quad\mbox{if }i\neq j.
	\end{array}\right.
$$
It is shown in \cite[Lemma 4]{ChJu15} that $A_0$ is invertible and $A_0^{-1}$
is bounded uniformly in $\rho\in[0,1]^n$. Thus, $J'=-A_0^{-1}D'$.

Finally, we invert the relations \eqref{3.Di}. Using $J_n=-\sum_{i=1}^{n-1}J_i$,
these relations (or the equivalent form $D_i=-\sum_{j=1}^n A_{ij}J_j$) 
can be written as
\begin{equation}\label{3.DCJ}
  \frac{D_i}{\rho_i} - \frac{D_n}{\rho_n} = -\sum_{j=1}^{n-1} C_{ij}J_j,
\end{equation}
where
\begin{align*}
  C_{ij} &= \frac{A_{ij}}{\rho_i} - \frac{A_{in}}{\rho_i}
	- \frac{A_{nj}}{\rho_n} + \frac{A_{nn}}{\rho_n}
	= -\frac{Y_{ij}}{\rho_i\rho_j} + \frac{Y_{in}}{\rho_i\rho_n}
	+ \frac{Y_{nj}}{\rho_n\rho_j} - \frac{Y_{nn}}{\rho_n^2}, \\
  Y_{ij} &= \left\{\begin{array}{ll}
	\sum_{\ell=1,\,\ell\neq i}^n k_{i\ell}\rho_i\rho_\ell &\quad\mbox{for }i=j, \\
	-k_{ij}\rho_i\rho_j &\quad\mbox{for }i\neq j.
	\end{array}\right.
\end{align*}
The matrix $-Y=(-Y_{ij})\in\R^{n\times n}$ is symmetric (since $(k_{ij})$ is 
symmetric), quasi-positive, irreducible, and it has the strictly positive eigenvector
$\bm{1}$ with eigenvalue zero. Hence, by the Perron-Frobenius theorem,
the spectral bound of $(-Y_{ij})$ is a simple eigenvalue (with value zero) 
and the spectrum of $(Y_{ij})$ consists of numbers with positive real part
and zero. Thus, $Y$ is positive semidefinite. 

We claim that the matrix $C=(C_{ij})\in\R^{(n-1)\times(n-1)}$ is positive definite
on $\operatorname{span}\{\bm{1}\}^\perp$.
Indeed, let $y\in\operatorname{span}\{\rho\}^\perp$. Then $y\cdot\rho=0$.
Since $\bm{1}\cdot\rho=1$, we have $y\not\in\operatorname{span}\{\bm{1}\}
=\operatorname{ker}(Y)$ and consequently, $\operatorname{span}\{\rho\}^\perp
\subset\operatorname{ker}(Y)^c$. This means that $-Y$ is negative definite
on $\operatorname{span}\{\rho\}^\perp$. A computation shows that for any
vector $w=(w_1,\ldots,w_{n-1})\in\R^{n-1}$, it holds that
$$
  \sum_{i,j=1}^{n-1}C_{ij}w_iw_j 
	= -\sum_{i,j=1}^{n}\frac{Y_{ij}}{\rho_i\rho_j}\widetilde w_i\widetilde w_j
$$
where $\widetilde w_i=w_i$ for $i=1,\ldots,n-1$ and
$\widetilde w_n=-\sum_{i=1}^{n-1}w_i$. Then $\widetilde w=(\widetilde w_1,\ldots,
\widetilde w_n)\in\operatorname{span}\{\bm{1}\}^\perp$. Since
$-Y$ is negative definite on $\operatorname{span}\{\rho\}^\perp$,
we infer that $(-Y_{ij}/(\rho_i\rho_j))$ 
is negative definite on $\operatorname{span}\{\bm{1}\}^\perp$.
Therefore, $C$ is positive definite on $\operatorname{span}\{\bm{1}\}^\perp$.
Its inverse $B:=c_{\rm tot}C^{-1}$ with $B=(B_{ij})$ exists, only depends on the mass
density vector $\rho$, and is positive definite uniformly for all $\rho\in[0,1]^n$
satisfying $\sum_{i=1}^n\rho_i=1$ \cite[Lemma 10]{ChJu15}.
We deduce from \eqref{3.DCJ} and \eqref{1.Ji} that
\begin{align}
  J_i &= -\sum_{j=1}^{n-1}B_{ij}\bigg(
	\frac{D_j}{\rho_j}-\frac{D_n}{\rho_n}\bigg) \nonumber \\
	&= -\sum_{j=1}^{n-1}B_{ij}\bigg(\frac{\na\log x_j}{M_j} - \frac{\na\log x_n}{M_n}
	+ \bigg(\frac{z_j}{M_j}-\frac{z_n}{M_n}\bigg)\na\Phi\bigg) \nonumber \\
	&= -\sum_{j=1}^{n-1}B_{ij}\na w_j	\label{3.Ji}
\end{align}
for $i=1,\ldots,n-1$ and $J_n=-\sum_{i=1}^{n-1}J_i$, recalling definition
\eqref{1.wi} of $w_i$. We summarize:

\begin{lemma}[Formulations of $J_i$]\label{lem.Ji}
Equations \eqref{3.Di} can be written equivalently as
$$
  J = -\widetilde A^{-1}D, \quad J' = -A_0^{-1}D', \quad J' = -B\na w.
$$
\end{lemma}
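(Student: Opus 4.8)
\emph{Proof plan.} The three identities are all obtained by inverting the linear system \eqref{3.Di} on the hyperplane $\{\sum_i J_i = 0\}$, and most of the underlying algebra has already been assembled in the discussion preceding the statement; the plan is to organize it into three steps. For the first identity I would write \eqref{3.Di} compactly as $D = -AJ$ with $A$ from \eqref{2.A}. Assumption (A3) gives $\operatorname{ker}(A^\top) = \operatorname{span}\{\bm 1\}$, hence $\operatorname{im}(A) = \operatorname{span}\{\bm 1\}^\perp$, and the constraint $\sum_i J_i = 0$ forces $J \in \operatorname{span}\{\bm 1\}^\perp = \operatorname{im}(A)$. Restricting $A$ to $\operatorname{im}(A)$ yields $\widetilde A$, which by \cite[Lemma 2.2]{JuSt13} has uniformly positive eigenvalues for $\rho \in [0,1]^n$ and is therefore invertible; inverting $D = -\widetilde A J$ gives $J = -\widetilde A^{-1}D$. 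For the second identity I would eliminate the $n$th component via $J_n = -\sum_{i=1}^{n-1}J_i$ and $D_n = -\sum_{i=1}^{n-1}D_i$, turning $D = -AJ$ into $D' = -A_0 J'$ with the stated entries of $A_0$; invertibility of $A_0$ and uniform boundedness of $A_0^{-1}$ on $[0,1]^n$ are \cite[Lemma 4]{ChJu15}, so $J' = -A_0^{-1}D'$.

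For the third identity I would divide the $i$th equation of \eqref{3.Di} by $\rho_i$, subtract the $n$th divided by $\rho_n$, and again use $J_n = -\sum_i J_i$ to reach the form \eqref{3.DCJ}, $D_i/\rho_i - D_n/\rho_n = -\sum_{j=1}^{n-1}C_{ij}J_j$. The structural point is that $C = (C_{ij})$ is positive definite on $\operatorname{span}\{\bm 1\}^\perp$: from the identity $\sum_{i,j}C_{ij}w_iw_j = -\sum_{i,j}(Y_{ij}/(\rho_i\rho_j))\widetilde w_i\widetilde w_j$ with $\widetilde w_n = -\sum_{i<n}w_i$, together with the fact that $-Y$ is symmetric, quasi-positive and irreducible with Perron eigenvector $\bm 1$ at eigenvalue $0$, so that $Y$ is positive semidefinite with $\operatorname{ker}(Y) = \operatorname{span}\{\bm 1\}$, one deduces that $-Y$ is negative definite on $\operatorname{span}\{\rho\}^\perp$ (which meets $\operatorname{span}\{\bm 1\}$ trivially because $\bm 1 \cdot \rho = 1 \neq 0$), hence $C$ is positive definite on $\operatorname{span}\{\bm 1\}^\perp$. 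Setting $B := c_{\rm tot}C^{-1}$, positive definite uniformly for $\rho \in [0,1]^n$ with $\sum_i\rho_i = 1$ by \cite[Lemma 10]{ChJu15}, and inverting gives $J_i = -\sum_j B_{ij}(D_j/\rho_j - D_n/\rho_n)$; substituting $D_j = \na x_j + (z_j x_j - (\rho\cdot x)\rho_j)\na\Phi$ from \eqref{1.Ji} and simplifying with $\na x_j / x_j = \na\log x_j$ and $c_j = \rho_j/M_j$ produces $J_i = -\sum_j B_{ij}\na w_j$ by definition \eqref{1.wi}. In all three cases $J_n = -\sum_{i<n}J_i$ recovers the last component, and consistency between the formulations is automatic since each is an inversion of the same system \eqref{3.Di} under the same constraint.

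The step I expect to be the main obstacle is the third one: verifying the algebraic identity relating the $C$-form to the $Y$-form and carefully tracking the substitution $w \mapsto \widetilde w$ so that negative definiteness of $-Y$ on $\operatorname{span}\{\rho\}^\perp$ transfers to positive definiteness of $C$ on $\operatorname{span}\{\bm 1\}^\perp$; once $B$ is in hand, the passage from $D_j/\rho_j - D_n/\rho_n$ to $\na w_j$ is a direct computation with the definitions of $c_{\rm tot}$, $x_j$, and $w_j$. The first two identities, by contrast, are essentially a bookkeeping exercise once the kernel structure from (A3) and the cited uniform bounds are invoked.
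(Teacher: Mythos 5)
Your proposal is correct and follows essentially the same route as the paper: the lemma is stated as a summary of the derivation in Section~\ref{sec.flux}, and your three steps (inverting $D=-AJ$ on $\operatorname{im}(A)=\operatorname{span}\{\bm{1}\}^\perp$ via (A3) and \cite[Lemma 2.2]{JuSt13}, eliminating the $n$th component to get $A_0$ with \cite[Lemma 4]{ChJu15}, and passing to \eqref{3.DCJ} with the $Y$-based positive-definiteness argument for $C$, $B=c_{\rm tot}C^{-1}$, and the identification $D_j/\rho_j-D_n/\rho_n=c_{\rm tot}^{-1}\na w_j$) reproduce exactly that argument, including the transfer of definiteness from $\operatorname{span}\{\rho\}^\perp$ to $\operatorname{span}\{\bm{1}\}^\perp$. (The expression $(\rho\cdot x)\rho_j$ you quote from \eqref{1.Ji} is a typo in the paper for $(z\cdot x)\rho_j$, as your own cancellation computation implicitly uses.)
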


The last expression for $J_i$ shows that the partial mass balances \eqref{1.rhoi}
can be formulated as
$$
  \pa_t\rho' - \diver(B\na w) = r'(\rho),
$$
where $\rho=\rho(w)$ and $B=B(\rho(w))$. By Definition \eqref{1.wi},
$w$ is a function of $\rho$ (and $\Phi$). 
The inverse relation $\rho(w)$ is discussed in the following subsection.

\subsection{Inversion of $\rho\mapsto w$}

Definition \eqref{1.wi} defines, for given $\Phi\in\R$, a mapping
$x\mapsto w$. We claim that this mapping can be inverted.
If the molar masses are all the same, $M:=M_i$, this can be done explicitly:
\begin{equation}\label{3.rhow}
  \rho_i(w) = \frac{\exp(Mw_i-(z_i-z_n)\Phi)}{1 + \sum_{j=1}^{n-1}
	\exp(Mw_j - (z_j-z_n)\Phi)}, \quad i=1,\ldots,n-1,
\end{equation}
and $\rho_n=1-\sum_{i=1}^{n-1}\rho_i$. Unfortunately, when the molar masses 
are different, we cannot derive an explicit formula. 
Instead we adapt first Lemma 6 in \cite{ChJu15}.

\begin{lemma}[Inversion of $w$ and $x$]\label{lem.inv.wx}
Let $\Phi\in\R$ and define the function
$$
  W_\Phi:\bigg\{x=(x_1,\ldots,x_n)\in(0,1)^{n}:\sum_{i=1}^{n}x_i=1\bigg\}
	\to \R^{n-1}
$$
by $W_\Phi(x)=(w_1(x),\ldots,w_{n-1}(x))$, where
$$
  w_i(x) = \frac{\log x_i}{M_i} - \frac{\log x_n}{M_n}
	+ \bigg(\frac{z_i}{M_i}-\frac{z_n}{M_n}\bigg)\Phi, \quad i=1,\ldots,n-1.
$$
Then $W_\Phi$ is invertible and we can define 
$x'(w,\Phi):=W^{-1}_\Phi(w)$ and 
$x_n(w,\Phi):=1-\sum_{i=1}^{n-1}x_i$, where $x'(w,\Phi)=(x_1,\ldots,x_{n-1})$.
\end{lemma}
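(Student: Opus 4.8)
The plan is to prove that $W_\Phi$ is a bijection onto $\R^{n-1}$ by exhibiting it, for fixed $\Phi$, as (essentially) the gradient of a strictly convex function whose Legendre transform is everywhere finite, so that the inversion is a global diffeomorphism. Concretely, I would first parametrize the simplex $\{x\in(0,1)^n:\sum_i x_i=1\}$ by the free variables $x'=(x_1,\ldots,x_{n-1})$ with $x_n=1-\sum_{i=1}^{n-1}x_i$, and introduce the strictly convex entropy-type potential
\begin{equation*}
  g(x') = \sum_{i=1}^{n-1}\frac{1}{M_i}\big(x_i\log x_i - x_i\big)
	+ \frac{1}{M_n}\big(x_n\log x_n - x_n\big).
\end{equation*}
A direct computation gives $\partial g/\partial x_i = (\log x_i)/M_i - (\log x_n)/M_n$, which is exactly $w_i(x)$ up to the additive constant $(z_i/M_i - z_n/M_n)\Phi$ coming from the (fixed) potential $\Phi$; hence $W_\Phi(x') = \nabla g(x') + b_\Phi$, where $b_\Phi\in\R^{n-1}$ is the constant vector with components $(z_i/M_i - z_n/M_n)\Phi$. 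Since a constant shift is trivially invertible, it suffices to invert $\nabla g$ on the open simplex.

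Next I would verify the hypotheses needed for $\nabla g$ to be a bijection from the open simplex onto all of $\R^{n-1}$. Strict convexity follows from computing the Hessian: $\partial^2 g/\partial x_i\partial x_j = \delta_{ij}/(M_ix_i) + 1/(M_nx_n)$, which is the sum of a positive diagonal matrix and a positive semidefinite rank-one matrix, hence positive definite on the (open, convex) domain; this already gives injectivity of $\nabla g$. For surjectivity I would check the boundary/coercivity behavior: as $x'$ approaches the boundary of the simplex (some $x_i\to 0$ or $x_n\to 0$), the corresponding logarithmic term drives $|\nabla g(x')|\to\infty$, so $\nabla g$ is a proper map; combined with the fact that the open simplex is diffeomorphic to $\R^{n-1}$ and $\nabla g$ is a local diffeomorphism (by the invertible Hessian and the inverse function theorem), a standard argument — e.g. $\nabla g$ is an open map with closed image, hence onto by connectedness of $\R^{n-1}$ — yields that $\nabla g$ is a global $C^1$-diffeomorphism onto $\R^{n-1}$. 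Equivalently and more cleanly, one can invoke the Legendre duality: $g$ is strictly convex and essentially smooth in the sense of Rockafellar, so $\nabla g$ maps the interior of $\mathrm{dom}\,g$ bijectively onto the interior of $\mathrm{dom}\,g^*$, and here $g^*$ is finite on all of $\R^{n-1}$ because $g$ has at most linear growth... which I would check directly, or simply note that the explicit equal-mass formula \eqref{3.rhow} already exhibits the inverse in closed form in that special case and the general case is handled by the convexity argument.

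Having established that $W_\Phi$ is a bijection from the open simplex onto $\R^{n-1}$, I would conclude by defining $x'(w,\Phi):=W_\Phi^{-1}(w-b_\Phi)$ — or, absorbing the shift, simply $W_\Phi^{-1}(w)$ with $W_\Phi$ as stated — and $x_n(w,\Phi):=1-\sum_{i=1}^{n-1}x_i(w,\Phi)$, which by construction lies in $(0,1)$ and makes $\sum_{i=1}^n x_i=1$ automatic. The smoothness of $w\mapsto x'$ and its dependence on $\Phi$ follow from the inverse function theorem applied to the $C^\infty$ map $W_\Phi$ with everywhere-invertible Jacobian.

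The main obstacle I anticipate is the surjectivity of $\nabla g$ when the molar masses differ, since the explicit formula \eqref{3.rhow} is no longer available. The crux is showing that the map does not "miss" part of $\R^{n-1}$, i.e. that its image is closed (properness as some $x_i\to 0$) as well as open (local diffeomorphism); the properness estimate requires a small amount of care because when $x_i\to 0$ the dominant term is $(\log x_i)/M_i$ in the $i$th component while the other components involve only $(\log x_n)/M_n$, so one must argue component by component that the coordinate of $\nabla g$ associated with the vanishing variable blows up — this is where the different $M_i$'s enter and where one cannot simply quote the equal-mass computation verbatim. Everything else is routine convex analysis.
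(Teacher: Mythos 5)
Your argument is correct, but it follows a genuinely different route from the paper. The paper (adapting \cite[Lemma 6]{ChJu15}) is constructive: for given $(w,\Phi)$ it reduces the inversion to the scalar equation $f(s)=s$ for the continuous, strictly decreasing function $f(s)=\sum_{i=1}^{n-1}(1-s)^{M_i/M_n}\exp[M_iw_i-M_i(z_i/M_i-z_n/M_n)\Phi]$, whose unique fixed point $s_0\in(0,1)$ is precisely $1-x_n$; the molar fractions are then read off from the closed formula \eqref{3.aux1}, and the strict monotonicity of $f$ gives uniqueness of the preimage at the same time. You instead recognize $W_\Phi$ as $\nabla g+b_\Phi$ for a strictly convex, essentially smooth entropy potential $g$ on the parametrized simplex and conclude bijectivity by strict monotonicity of $\nabla g$ (injectivity) plus properness at the simplex boundary combined with the local-diffeomorphism/open-and-closed-image argument (surjectivity); your Hessian computation and the component-by-component blow-up analysis you flag (if $x_n\to 0$, use a component $j\le n-1$ with $x_j$ bounded away from zero, whose $w_j$-coordinate tends to $+\infty$) are exactly what is needed, so the argument closes. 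What each approach buys: yours yields in addition that the inverse is smooth (invertible Hessian plus the inverse function theorem) and makes the link to the entropy structure explicit, while the paper's fixed-point formulation is more elementary, handles the unequal molar masses with no global-surjectivity machinery, and has a practical payoff, since the very same scalar problem reappears as \eqref{4.fpp} in the numerical algorithm for recovering $x$ from $(w,\Phi)$. Two cosmetic points: the justification that $g^*$ is finite on all of $\R^{n-1}$ is not ``at most linear growth'' but simply that $g$ is bounded below on a bounded domain; and the final definition should read $x'(w,\Phi)=(\nabla g)^{-1}(w-b_\Phi)$, i.e.\ $W_\Phi^{-1}(w)$, not $W_\Phi^{-1}(w-b_\Phi)$, as you indeed note when absorbing the shift.
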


\begin{proof}
The proof is similar to that one of \cite[Lemma 6]{ChJu15}. 
Let $w=(w_1,\ldots,w_{n-1})\in\R^{n-1}$ and $\Phi\in\R$ be given. 
Define the function $f:[0,1]\to[0,\infty)$ by
$$
  f(s) = \sum_{i=1}^{n-1}(1-s)^{M_i/M_n}\exp\bigg[
	M_iw_i-M_i\bigg(\frac{z_i}{M_i}-\frac{z_n}{M_n}\bigg)\Phi\bigg], \quad
	s\in[0,1].
$$
Then $f$ is continuous, strictly decreasing, and $0=f(1)<f(s)<f(0)$ for $s\in(0,1)$.
Hence, there exists a unique fixed point $s_0\in(0,1)$ such that $f(s_0)=s_0$.
We define 
\begin{equation}\label{3.aux1}
  x_i = (1-s_0)^{M_i/M_n}\exp\bigg[
	M_iw_i-M_i\bigg(\frac{z_i}{M_i}-\frac{z_n}{M_n}\bigg)\Phi\bigg] > 0, \quad
	i=1,\ldots,n-1.
\end{equation}
By definition, we have $\sum_{i=1}^{n-1}x_i=f(s_0)=s_0<1$. We set $x_n=1-s_0>0$
such that $\sum_{i=1}^n x_i=1$. Moreover, \eqref{3.aux1} can be written
equivalently as
$$
  \frac{\log x_i}{M_i} + \frac{\log(1-s_0)}{M_n}
	+ \bigg(\frac{z_i}{M_i}-\frac{z_n}{M_n}\bigg)\Phi = w_i,
$$
and since $1-s_0=x_n$, this shows that $W^{-1}_\Phi(w)=x'$ is the inverse mapping.
\end{proof}

Given $\rho\in[0,1]^n$, we know that $x_i=\rho_i/(c_{\rm tot}M_i)$ for
$i=1,\ldots,n$ and $\sum_{i=1}^n x_i=1$. This relation can be inverted too.
We recall \cite[Lemma 7]{ChJu15}:

\begin{lemma}[Inversion of $\rho$ and $x$]\label{lem.inv.rhox}
Let $x'\in(0,1)^{n-1}$ and $x_n=1-\sum_{i=1}^{n-1}x_i>0$ be given and define
for $i=1,\ldots,n$,
$$
  \rho_i(x') = \rho_i := c_{\rm tot}M_ix_i, \quad\mbox{where }
	c_{\rm tot} = \bigg(\sum_{j=1}^n M_jx_j\bigg)^{-1}.
$$
Then $\rho=(\rho_1,\ldots,\rho_n)$ is the unique vector satisfying
$\rho_n=1-\sum_{i=1}^{n-1}\rho_i>0$, $x_i=\rho_i/(c_{\rm tot}M_i)$ for $i=1,\ldots,n$,
and $c_{\rm tot}=\sum_{i=1}^n\rho_i/M_i$. 
\end{lemma}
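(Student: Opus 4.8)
The plan is to prove existence and uniqueness separately, both by direct algebraic manipulation of the three defining relations. The statement is essentially a bookkeeping identity, so no deep machinery is needed — the point is only to check that the natural rescaling $\rho_i=c_{\rm tot}M_ix_i$ does the job and that it is forced.

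First I would verify existence by checking that the explicitly defined vector $\rho=(\rho_1,\dots,\rho_n)$ with $\rho_i=c_{\rm tot}M_ix_i$ and $c_{\rm tot}=\big(\sum_{j=1}^nM_jx_j\big)^{-1}$ satisfies all three listed properties. The relation $x_i=\rho_i/(c_{\rm tot}M_i)$ is immediate from the definition. For the normalization one computes $\sum_{i=1}^n\rho_i=c_{\rm tot}\sum_{i=1}^nM_ix_i=c_{\rm tot}\cdot c_{\rm tot}^{-1}=1$, whence $\rho_n=1-\sum_{i=1}^{n-1}\rho_i$; positivity $\rho_n=c_{\rm tot}M_nx_n>0$ follows since all three factors are positive (here $x_n>0$ by hypothesis, and $c_{\rm tot}>0$ because each $x_j>0$). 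Finally $\sum_{i=1}^n\rho_i/M_i=c_{\rm tot}\sum_{i=1}^nx_i=c_{\rm tot}$, which is the last required identity.

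For uniqueness I would suppose $\tilde\rho$ is another vector satisfying the three properties, with associated total concentration $\tilde c:=\sum_{i=1}^n\tilde\rho_i/M_i$. From $x_i=\tilde\rho_i/(\tilde cM_i)$ one gets $\tilde\rho_i=\tilde cM_ix_i$ for all $i$. The condition $\tilde\rho_n=1-\sum_{i=1}^{n-1}\tilde\rho_i$ together with positivity forces $\sum_{i=1}^n\tilde\rho_i=1$, hence $1=\tilde c\sum_{i=1}^nM_ix_i$, i.e.\ $\tilde c=\big(\sum_{i=1}^nM_ix_i\big)^{-1}=c_{\rm tot}$. Therefore $\tilde\rho_i=c_{\rm tot}M_ix_i=\rho_i$ for every $i$, which gives uniqueness.

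I expect no genuine obstacle here: the only thing to watch is that the three relations are mutually consistent rather than over-determined, and that the positivity of $\rho_n$ is actually preserved — both of which reduce to the single normalization $c_{\rm tot}\sum_iM_ix_i=1$. Conceptually this lemma is the companion of Lemma \ref{lem.inv.wx}: that result inverts $x\mapsto w$ by a fixed-point argument, while the present one inverts $\rho\mapsto x$ by the elementary rescaling above, so that composing the two yields the inverse of the map $\rho\mapsto w$ used throughout the analysis.
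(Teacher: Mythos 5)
Your proof is correct. Note that the paper itself gives no proof of this lemma: it is simply recalled from \cite[Lemma 7]{ChJu15}, and your elementary existence-and-uniqueness verification (checking that $\rho_i=c_{\rm tot}M_ix_i$ with $c_{\rm tot}=(\sum_j M_jx_j)^{-1}$ satisfies the three relations, then showing any other solution must have the same total concentration and hence coincide) is exactly the standard argument that reference employs. One cosmetic remark: in the uniqueness step, the identity $\sum_{i=1}^n\tilde\rho_i=1$ follows from the relation $\tilde\rho_n=1-\sum_{i=1}^{n-1}\tilde\rho_i$ alone; positivity is not needed there, only for the sign statement $\rho_n>0$.
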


Combining Lemmas \ref{lem.inv.wx} and \ref{lem.inv.rhox}, we conclude as in
\cite{ChJu15} that the mapping $\rho\mapsto w$ can be inverted.
In fact, we just have to define $\rho'=\rho'(x'(w,\Phi))$.

\begin{corollary}[Inversion of $\rho$ and $w$]\label{coro.inv}
Let $w=(w_1,\ldots,w_{n-1})\in\R^{n-1}$ and $\Phi\in\R$ be given. 
Then there exists a unique vector
$\rho=(\rho_1,\ldots,\rho_n)\in(0,1)^n$ satisfying $\sum_{i=1}^n\rho_i=1$
such that \eqref{1.wi} holds for $\rho_n=1-\sum_{i=1}^{n-1}\rho_i$ and
$x_i=\rho_i/(c_{\rm tot}M_i)$ with $c_{\rm tot}=\sum_{i=1}^n\rho_i/M_i$. 
The mapping $\rho':\R^{n-1}\to
(0,1)^{n-1}$, $\rho'(w,\Phi)=(\rho_1,\ldots,\rho_{n-1})$, is bounded. 
\end{corollary}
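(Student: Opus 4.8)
The plan is to obtain the inverse of $\rho\mapsto w$ as the composition of the two inversions already established, namely $w\mapsto x'$ from Lemma~\ref{lem.inv.wx} and $x'\mapsto\rho$ from Lemma~\ref{lem.inv.rhox}. Given $w=(w_1,\ldots,w_{n-1})\in\R^{n-1}$ and $\Phi\in\R$, I would first set $x'=x'(w,\Phi):=W_\Phi^{-1}(w)\in(0,1)^{n-1}$ and $x_n:=1-\sum_{i=1}^{n-1}x_i>0$, so that $x=(x_1,\ldots,x_n)$ lies in $(0,1)^n$, sums to one, and satisfies the defining identities of $W_\Phi$, i.e.\ relation \eqref{1.wi} with these molar fractions. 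Next I would apply Lemma~\ref{lem.inv.rhox} to $x'$ to define $\rho=\rho(x')$ by $\rho_i=c_{\rm tot}M_ix_i$ with $c_{\rm tot}=(\sum_{j=1}^nM_jx_j)^{-1}$.

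It then remains to check that this $\rho$ has the required properties. Summing the definition gives $\sum_{i=1}^n\rho_i=c_{\rm tot}\sum_{i=1}^nM_ix_i=1$, and since each $\rho_i>0$ we get $\rho_i=1-\sum_{j\neq i}\rho_j<1$, hence $\rho\in(0,1)^n$; in particular $\rho_n=1-\sum_{i=1}^{n-1}\rho_i$. Moreover $x_i=\rho_i/(c_{\rm tot}M_i)$ by construction, and $\sum_{i=1}^n\rho_i/M_i=c_{\rm tot}\sum_{i=1}^nx_i=c_{\rm tot}$, so the consistency relation $c_{\rm tot}=\sum_{i=1}^n\rho_i/M_i$ holds. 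Substituting $x_i=\rho_i/(c_{\rm tot}M_i)$ back into \eqref{1.wi} shows that $w$ is recovered, so the constructed $\rho$ solves the problem.

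For uniqueness, I would suppose $\tilde\rho\in(0,1)^n$ with $\sum_i\tilde\rho_i=1$ also satisfies \eqref{1.wi} for $\tilde x_i=\tilde\rho_i/(\tilde c_{\rm tot}M_i)$, $\tilde c_{\rm tot}=\sum_i\tilde\rho_i/M_i$. Then $\tilde x'=(\tilde x_1,\ldots,\tilde x_{n-1})$ satisfies $W_\Phi(\tilde x)=w$, so injectivity of $W_\Phi$ (Lemma~\ref{lem.inv.wx}) forces $\tilde x'=x'$, and then the uniqueness assertion in Lemma~\ref{lem.inv.rhox} gives $\tilde\rho=\rho$. Finally, boundedness of $\rho':\R^{n-1}\to(0,1)^{n-1}$ is immediate, since by the above its image is contained in the bounded set $\{\rho'\in(0,1)^{n-1}:\sum_{i=1}^{n-1}\rho_i\le 1\}$.

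Since every step is a direct application of the two preceding lemmas together with elementary algebra, there is no real obstacle here; the only point requiring a little care is verifying the mutual compatibility of the normalizations in Lemmas~\ref{lem.inv.wx} and \ref{lem.inv.rhox}, i.e.\ that the total concentration produced by the second lemma agrees with $\sum_i\rho_i/M_i$ and that the molar fractions are reproduced, which is exactly the computation $\sum_i\rho_i/M_i=c_{\rm tot}\sum_ix_i=c_{\rm tot}$ noted above.
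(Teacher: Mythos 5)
Your proposal is correct and follows essentially the same route as the paper, which simply composes Lemma \ref{lem.inv.wx} with Lemma \ref{lem.inv.rhox} by defining $\rho'=\rho'(x'(w,\Phi))$. Your write-up merely makes explicit the compatibility check $c_{\rm tot}=\sum_i\rho_i/M_i$, the uniqueness argument via injectivity of both maps, and the boundedness, all of which the paper leaves to the reader.
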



\section{Proof of Theorem \ref{thm.ex}}\label{sec.ex}

{\em Step 1: existence of solutions.}
The idea is to apply the Leray-Schauder fixed-point theorem. We need to define
the fixed-point operator. For this, let $\chi\in L^\infty(\Omega;\R^{n-1})$ 
and $\sigma\in[0,1]$.
There exists a unique solution $\Phi^k-\Phi_D\in P_N$ to the linear finite-dimensional
problem
$$
  \lambda\int_\Omega\na\Phi^k\cdot\na\theta dy = \int_\Omega\bigg(\sum_{i=1}^n
	z_ic_i(\chi+w_D,\Phi^k) + f(y)\bigg)\theta dy
$$
for all $\theta\in P_N$. In particular, $\Phi^k\in L^\infty(\Omega)$. 
Next, we wish to solve the linear finite-dimensional problem
\begin{equation}\label{4.LM}
  a(u,\phi) = \sigma F(\phi)\quad\mbox{for all }\phi\in V_N,
\end{equation}
where
\begin{align*}
  a(u,\phi) &= \int_\Omega\na\phi:B(\chi+w_D,\Phi^k)\na u dy 
	+ \eps\int_\Omega u\cdot\phi dy, \\
  F(\phi) &= -\frac{1}{\tau}\int_\Omega\big(\rho'(\chi+w_D,\Phi^k)
	- \rho'(u^{k-1}+w_D,\Phi^{k-1})\big)dy \\
	&\phantom{xx}{}+ \int_\Omega r'(x(\chi+w_D,\Phi^k))\cdot\phi dy
	- \int_\Omega\na\phi:B(\chi+w_D,\Phi^k)\na w_D dy
\end{align*}
for $u$, $\phi\in V_N$. Since $\chi+w_D\in L^\infty(\Omega;\R^{n-1})$ and
$\Phi^k\in L^\infty(\Omega)$, 
Corollary \ref{coro.inv} shows that $\rho(\chi+w_D,\Phi^k)$ is bounded.
We know from Section \ref{sec.flux} that the matrix
$B=B(\chi+w_D,\Phi^k)$ is positive definite and its elements
are bounded. We deduce that the forms $a$ and $F$ are continuous on $V_N$.
Exploiting the equivalence of the norms in the finite-dimensional space $V_N$,
we find that
$$
  a(u,u) \ge \eps\|u\|_{L^2(\Omega)}^2 \ge \eps K_N\|u\|_{H^1(\Omega)}^2
$$
for some constant $K_N>0$,
which implies that $a$ is coercive on $V_N$. By the Lax--Milgram lemma, there
exists a unique solution $u\in V_N\subset L^\infty(\Omega;\R^{n-1})$ to
\eqref{4.LM} satisfying
\begin{equation}\label{4.aux1}
  \eps KN\|u\|_{L^\infty(\Omega)}^2 \le a(u,u) = \sigma F(u) 
	\le K_F\|u\|_{H^1(\Omega)},
\end{equation}
and the constants $K_N$ and $K_F$ are independent of $\tau$ and $\sigma$.
This defines the fixed-point operator $S:L^\infty(\Omega;\R^{n-1})\times[0,1]
\to L^\infty(\Omega;\R^{n-1})$, $S(\chi,\sigma)=u$. Standard arguments show that
$S$ is continuous. Since $V_N$ is finite-dimensional, $S$ is also compact.
Furthermore, $S(\chi,0)=0$. Estimate \eqref{4.aux1} provides a uniform bound
for all fixed points of $S(\cdot,\sigma)$. Thus, by the Leray-Schauder
fixed-point theorem, there exists $u^k\in V_N$ such that $S(u^k,1)=u^k$,
and $w^k:=u^k+w_D$, $\Phi^k$ solve \eqref{1.sch1}-\eqref{1.sch2}.

{\em Step 2: proof of the discrete entropy production inequality \eqref{1.ei}.} 
We use the test function
$\tau(w^k-w_D)\in V_N$ in \eqref{1.sch1} and set $\rho^k:=\rho'(w^k,\Phi^k)$:
\begin{align*}
  \int_\Omega&(\rho^k-\rho^{k-1})\cdot(w^k-w_D) dy
	+ \tau\int_\Omega\na(w^k-w_D):B(w^k,\Phi^k)\na w^k dy \\
  & {}+\eps\tau\int_\Omega|w^k-w_D|^2 dy 
	\le \tau\int_\Omega r'(x^k)\cdot(w^k-w_D)dy.
\end{align*}
We claim that the first term on the left-hand side is the difference of
the entropies at time steps $k$ and $k-1$. To show this, we split the entropy
density into two parts, $h(\rho^k)=h_1(\rho^k)+h_2(\rho^k)$, where
$$
  h_1(\rho^k) = c^k_{\rm tot}\sum_{i=1}^n x_i^k\log x_i^k, \quad
	h_2(\Phi^k) = \frac{\lambda}{2}|\na(\Phi^k-\Phi_D)|^2,
$$
where we recall that $x_i^k=\rho_i^k/(c_{\rm tot}^kM_i)$ and 
$c_{\rm tot}^k=\sum_{i=1}^n\rho_i^k/M_i$. By the convexity of $h_1$, we have
$$
  h_1(\rho^k)-h_1(\rho^{k-1})\le \frac{\pa h_1}{\pa\rho'}(\rho^k)\cdot(\rho^k-\rho^{k-1})
	= \sum_{i=1}^n(\rho^k_i-\rho_i^{k-1})\frac{\log x_i^k}{M_i}.
$$
Therefore, using $\rho_n^k-\rho_n^{k-1}=-\sum_{i=1}^{n-1}(\rho_i^k-\rho_i^{k-1})$,
\begin{align}
  \int_\Omega\big(h_1(\rho^k)-h_1(\rho^{k-1})\big)dx
	&\le \int_\Omega\bigg(\sum_{i=1}^{n-1}(\rho_i^k-\rho_i^{k-1})\frac{\log x_i^k}{M_i}
	+ (\rho_n^k-\rho_n^{k-1})\frac{\log x_n^k}{M_n}\bigg)dy \nonumber \\
	&= \int_\Omega\sum_{i=1}^{n-1}(\rho_i^k-\rho_i^{k-1})
	\bigg(\frac{\log x_i^k}{M_i}-\frac{\log x_n^k}{M_n}\bigg)dy. \label{4.aux2}
\end{align}

For the estimate of $h_2$, we first observe that
\begin{align*}
  \sum_{i=1}^{n-1}(\rho_i^k-\rho_i^{k-1})\bigg(\frac{z_i}{M_i}-\frac{z_n}{M_n}\bigg)
	&= \sum_{i=1}^{n-1}(\rho_i^k-\rho_i^{k-1})\frac{z_i}{M_i}
	+ (\rho_n^k-\rho_n^{k-1})\frac{z_n}{M_n} \\
	&= \sum_{n=1}^n(\rho_i^k-\rho_i^{k-1})\frac{z_i}{M_i}.
\end{align*}
We infer from the Poisson equation \eqref{1.sch2} and Young's inequality that
\begin{align}
  \int_\Omega&\sum_{i=1}^{n-1}(\rho_i^k-\rho_i^{k-1})
	\bigg(\frac{z_i}{M_i}-\frac{z_n}{M_n}\bigg)(\Phi^k-\Phi_D)dy \nonumber \\
	&= \int_\Omega\sum_{i=1}^{n}(\rho_i^k-\rho_i^{k-1})\frac{z_i}{M_i}(\Phi^k-\Phi_D)dy 
	= \int_\Omega\sum_{i=1}^n z_i(c_i^k-c_i^{k-1})(\Phi^k-\Phi_D)dy \nonumber \\
	&= \lambda\int_\Omega\na\big((\Phi^k-\Phi_D)-(\Phi^{k-1}-\Phi_D)\big)
	(\Phi^k-\Phi_D)dy \nonumber \\
	&\ge \frac{\lambda}{2}\int_\Omega|\na(\Phi^k-\Phi_D)|^2 dy
	- \frac{\lambda}{2}\int_\Omega|\na(\Phi^{k-1}-\Phi_D)|^2 dy \nonumber \\
	&= \int_\Omega\big(h_2(\Phi^k)-h_2(\Phi^{k-1})\big)dy. \label{4.aux3}
\end{align}
Taking into account the property $r_n(\rho^k)=-\sum_{i=1}^{n-1}r_i(\rho^k)$,
definition \eqref{1.wi} of $w_i^k$, and Assumption (A4), we compute
\begin{align}
  \int_\Omega & r'(x^k)\cdot(w^k-w_D) dy
	= \int_\Omega\sum_{i=1}^{n-1}r_i(x^k)\bigg(\frac{\log x_i^k}{M_i}
	- \frac{\log x_n^{k}}{M_n}\bigg)dy \nonumber \\
	&\phantom{xx}{}+ \int_\Omega\sum_{i=1}^{n-1}r_i(x^k)\bigg(\frac{z_i}{M_i}
	- \frac{z_n}{M_n}\bigg)(\Phi^k-\Phi_D)dy \nonumber \\
	&= \int_\Omega\sum_{i=1}^n r_i(x^k)\frac{\log x_i^k}{M_i}dy
	+ \int_\Omega\sum_{i=1}^n r_i(x^k)\frac{z_i}{M_i}(\Phi^k-\Phi_D)dy \nonumber \\
	&\le \int_\Omega\sum_{i=1}^n r_i(x^k)\frac{z_i}{M_i}(\Phi^k-\Phi_D)dy,
	\label{4.aux4}
\end{align}
Combining \eqref{4.aux2}-\eqref{4.aux4} gives the conclusion.


\section{Proof of Theorem \ref{thm.conv}}\label{sec.conv}

Let $(w^k,\Phi^k)$ be a weak solution to scheme \eqref{1.sch1}-\eqref{1.sch2}
and define $\rho^k=\rho(w^k,\Phi^k)$.

{\em Step 1: uniform estimates.}
We derive estimates for $\rho^k$ and $\Phi^k$ independent of
$\eps$, $\tau$, and $N$. The starting point is the discrete entropy production
inequality \eqref{1.ei}, and the main task is to estimate the diffusion part.

\begin{lemma}[Estimate of the diffusion part]\label{lem.diff}
There exist constants $K_1>0$ and $K_2>0$, both independent of $\eps$, $\tau$, and $N$,
such that
$$
  \int_\Omega\na(w^k-w_D):B\na w^k dy
	\ge K_1\sum_{i=1}^n\|\na (x_i^k)^{1/2}\|_{L^2(\Omega)}^2 - K_2.
$$
\end{lemma}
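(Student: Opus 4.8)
The plan is to exploit the positive definiteness of $B$ established in Section \ref{sec.flux} together with the explicit structure of the driving forces $D_i$ coming from the Maxwell--Stefan inversion. First I would recall from Lemma \ref{lem.Ji} that $J' = -B\na w$, so that $\na\phi : B\na w$ can be related back to the original bilinear form $\sum_{i,j} C_{ij} J_i J_j$ via $B = c_{\rm tot}C^{-1}$. The key identity (a discrete analogue of the formal entropy production) is that, using \eqref{3.DCJ},
\begin{equation*}
  \int_\Omega\na w^k : B\na w^k\,dy = \int_\Omega \sum_{j=1}^{n-1} J_j^k\Big(\frac{D_j^k}{\rho_j^k} - \frac{D_n^k}{\rho_n^k}\Big)dy = -\int_\Omega\sum_{i,j=1}^n \frac{Y_{ij}}{\rho_i^k\rho_j^k}\,\widetilde J_i^k\widetilde J_j^k\,dy,
\end{equation*}
and then to apply the Perron--Frobenius/quasi-positivity analysis from Section \ref{sec.flux} (and \cite[Lemma 2.2]{JuSt13}) to bound this quadratic form from below by $K\sum_i |D_i^k|^2/(\rho_i^k M_i) \ge K\sum_i M_i^{1/2}|D_i^k|^2/x_i^k$, uniformly on $\rho\in[0,1]^n$ with $\sum_i\rho_i = 1$. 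The $\eps$- and $\tau$-independence is automatic here since all these constants depend only on the $k_{ij}$ and on bounds for $c_{\rm tot}$, not on the discretization.

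Next I would deal with the difference between $\na w^k$ and $\na(w^k - w_D)$. Since $w_D$ depends only on $\Phi_D$, which is fixed data in $H^1\cap L^\infty$, we have $\na(w^k - w_D) : B\na w^k = \na w^k : B\na w^k - \na w_D : B\na w^k$, and the cross term is controlled by Cauchy--Schwarz and Young's inequality: $|\na w_D : B\na w^k| \le \tfrac12 \na w^k : B\na w^k + C\|B\|\,|\na w_D|^2$, absorbing half the good term and leaving an error bounded by $C\int_\Omega|\na(\Phi_D)|^2 dy \le C\|\Phi_D\|_{H^1}^2 =: K_2'$, again independent of $\eps, \tau, N$. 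So up to halving $K$ and adding a constant, it suffices to bound $\int_\Omega \sum_i |D_i^k|^2/x_i^k\,dy$ from below by $K_1'\sum_i\|\na(x_i^k)^{1/2}\|_{L^2}^2 - K_2''$.

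The heart of the matter, and the step I expect to be the main obstacle, is passing from $\sum_i |D_i^k|^2/x_i^k$ to $\sum_i |\na(x_i^k)^{1/2}|^2 = \tfrac14\sum_i |\na x_i^k|^2/x_i^k$, because $D_i^k = \na x_i^k + (z_i x_i^k - (\rho^k\cdot x^k)\rho_i^k)\na\Phi^k$ contains the extra electric drift term that is \emph{not} present in the neutral Maxwell--Stefan case of \cite{ChJu15,JuSt13}. The idea is to write $\na x_i^k = D_i^k - g_i^k\na\Phi^k$ with $g_i^k := z_i x_i^k - (\rho^k\cdot x^k)\rho_i^k$ bounded (since $x_i^k, \rho_i^k \in (0,1)$ and $|z|$ is finite), so that $|\na x_i^k|^2/x_i^k \le 2|D_i^k|^2/x_i^k + 2|g_i^k|^2|\na\Phi^k|^2/x_i^k$. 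This reduces the problem to controlling $\sum_i |g_i^k|^2|\na\Phi^k|^2/x_i^k$ by $C\int_\Omega|\na\Phi^k|^2 dy$ plus a multiple of $\sum_i\|\na(x_i^k)^{1/2}\|^2$ — but the factor $1/x_i^k$ can blow up where $x_i^k$ is small, so a naive bound fails. The fix is to use that $g_i^k$ vanishes to the right order: more precisely one should exploit $\sum_i z_i c_i^k = (\sum_i z_i\rho_i^k/M_i)$ and the algebraic identity $\sum_i g_i^k = (z\cdot x^k) - (\rho^k\cdot x^k)\sum_i\rho_i^k = (z\cdot x^k) - (\rho^k\cdot x^k)$, rewriting the drift contribution so that $g_i^k/\sqrt{x_i^k}$ stays bounded (note $|z_i x_i^k|/\sqrt{x_i^k} = |z_i|\sqrt{x_i^k} \le |z_i|$ and $|\rho^k\cdot x^k|\rho_i^k/\sqrt{x_i^k} \le C\rho_i^k/\sqrt{x_i^k} = C\sqrt{c_{\rm tot}^k M_i}\sqrt{x_i^k} \le C'$ since $\rho_i^k = c_{\rm tot}^k M_i x_i^k$). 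With $g_i^k/\sqrt{x_i^k}$ uniformly bounded, $\sum_i |g_i^k|^2|\na\Phi^k|^2/x_i^k \le C|\na\Phi^k|^2$, and then the $H^1$ bound on $\Phi^k$ coming from the Poisson equation \eqref{1.sch2} (tested with $\Phi^k - \Phi_D$, using the uniform $L^\infty$ bound on $\rho_i^k$ and on $f$) gives $\int_\Omega|\na\Phi^k|^2 dy \le K_3$ uniformly in $\eps,\tau,N$. Collecting: $\sum_i\|\na(x_i^k)^{1/2}\|^2 \le \tfrac14\sum_i\int_\Omega(2|D_i^k|^2/x_i^k + 2C|\na\Phi^k|^2)dy \le C\int_\Omega\sum_i|D_i^k|^2/x_i^k\,dy + C K_3 \le C'\int_\Omega\na w^k:B\na w^k\,dy + K_2'''$, which after combining with the first two paragraphs yields the claimed inequality with suitable $K_1, K_2 > 0$.
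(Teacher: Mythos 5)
Your proposal is correct and follows essentially the same route as the paper: the pivot is the uniform weighted bound $\int_\Omega\na w^k:B\na w^k\,dy\ge K\int_\Omega\sum_{i=1}^n|D_i^k|^2/x_i^k\,dy$ (which the paper derives via the symmetrization $\widetilde A_S=P^{-1/2}\widetilde AP^{1/2}$, $P^{1/2}=M^{1/2}X^{1/2}$, together with a zero-sum correction, rather than from the Perron--Frobenius argument you cite), followed by exactly the paper's observation that $D_i^k/\sqrt{x_i^k}=2\na (x_i^k)^{1/2}+\bigl(z_i\sqrt{x_i^k}-(z\cdot x^k)\rho_i^k/\sqrt{x_i^k}\bigr)\na\Phi^k$ has a uniformly bounded drift coefficient, and the uniform $H^1$ bound on $\Phi^k$ from the discrete Poisson equation. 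Your treatment of the $w_D$ cross term by $B$-weighted Cauchy--Schwarz is a harmless variant of the paper's $\na w_D:B\na w^k=\na w_D:A_0^{-1}(D^k)'$ plus $\delta$-Young argument.
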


\begin{proof}
We drop the superindex $k$ in the proof to simplify the notation.
Recall that $\widetilde A=A|_{\text{im}(A)}$, where 
$\mbox{im}(A)=\operatorname{span}\{\bm{1}\}^\perp$. We introduce as in the proof
of Lemma 12 in \cite{ChJu15} the symmetrization 
$\widetilde A_S=P^{-1/2}\widetilde AP^{1/2}$, where
$P^{1/2}=M^{1/2}X^{1/2}$ and
$M^{1/2}:=\operatorname{diag}(M_1^{1/2},$ $\ldots,M_n^{1/2})$, 
$X^{1/2}:=\operatorname{diag}(x_1^{1/2},\ldots,x_n^{1/2})$.
Then $\widetilde A_S^{-1}=P^{-1/2}\widetilde A^{-1}P^{1/2}$ 
is a self-adjoint endomorphism
whose smallest eigenvalue is bounded from below by some positive constant which
depends only on $(k_{ij})$. 

Since $0=\sum_{i=1}^n J_i=\sum_{i=1}^n (B\na w)_i$, we can express the last component
in terms of the other components, $(B\na w)_n = -\sum_{i=1}^{n-1}(B\na w)_i$.
Then
\begin{align*}
  \na w:B\na w &= \sum_{i=1}^{n-1}\bigg\{\frac{\na\log x_i}{M_i}
	-\frac{\na\log x_n}{M_n} + \bigg(\frac{z_i}{M_i}-\frac{z_n}{M_n}\bigg)
	\na\Phi\bigg\}\cdot(B\na w)_i \\
	&= \sum_{i=1}^{n-1}\frac{1}{M_i}\na(\log x_i+z_i\Phi)\cdot(B\na w)_i
	- \frac{1}{M_n}\na(\log x_n+z_n\Phi)\sum_{i=1}^{n-1}(B\na w)_i \\
  &= \sum_{i=1}^n\frac{1}{M_i}\na(\log x_i+z_i\Phi)\cdot(B\na w)_i.
\end{align*}
To simplify the notation, we set $\Psi_i=\na(\log x_i+z_i\Phi)/M_i$,
and $\Psi=(\Psi_1,\ldots,\Psi_n)$.
By Lemma \ref{lem.Ji}, $B\na w=\widetilde A^{-1}D 
= P^{1/2}\widetilde A_S^{-1}P^{-1/2}D$. Hence,
\begin{align}
  \na w:B\na w &= \Psi:B\na w = \Psi:M^{1/2}X^{1/2}\widetilde A_S^{-1}
	X^{-1/2}M^{-1/2}D \nonumber \\
	&= \sum_{i,j=1}^n\Psi_i M_i^{1/2}x_i^{1/2}(\widetilde A_S^{-1})_{ij}
	x_j^{-1/2}M_j^{-1/2}D_i \nonumber \\
	&= \sum_{i,j=1}^n\big(2\na x_i^{1/2}+z_ix_i^{1/2}\na\Phi\big)M_i^{-1/2}
	(\widetilde A_S^{-1})_{ij}M_j^{-1/2} \nonumber \\
	&\phantom{xx}{}\times 
	\big(2\na x_j^{1/2} + (z_jx_j^{1/2} - (x\cdot z)\rho_jx_j^{-1/2})\na\Phi\big).
	\label{4.aux5}
\end{align}
In view of $\sum_{i=1}^{n}(B\na w)_i=0$, it follows that 
\begin{align*}
  \sum_{i,j=1}^n & \big(M_i^{-1/2} x_i^{-1/2}(z\cdot x)\rho_i\na\Phi\big)
	(\widetilde A_S)^{-1}_{ij}
	M_j^{-1/2}\big(2\na x_j^{1/2} + (z_jx_j^{1/2} - (x\cdot z)\rho_jx_j^{-1/2})
	\na\Phi\big) \\
	&= \sum_{i,j=1}^n\big(c(z\cdot x)\na\Phi\big)\widetilde A^{-1}_{ij}
	\big(\na x_j + (z_jx_j - (x\cdot z)\rho_j\na\Phi\big) \\
	&= \big(c(z\cdot x)\na\Phi\big)\cdot\sum_{i=1}^n(B\na w)_{i} = 0.
\end{align*}
Adding this expression to \eqref{4.aux5}, we find that
\begin{align*}
  \na w:B\na w &= \sum_{i,j=1}^n M_i^{-1/2}
	\big(2\na x_i^{1/2} + (z_ix_i^{1/2} - (z\cdot x)\rho_i x_i^{-1/2}\na\Phi\big)
	(\widetilde A_S)^{-1}_{ij}M_j^{-1/2} \\
	&\phantom{xx}{}\times\big(2\na x_j^{1/2} + (z_jx_j^{1/2} 
	- (z\cdot x)\rho_j x_j^{-1/2}\na\Phi\big).
\end{align*}
The matrix $\widetilde A_S^{-1}$ is positive definite on 
$\mbox{im}(\widetilde A_S)=\operatorname{span}\{\rho^{1/2}\}$.
As the vector $(2\na x_i^{1/2} + (z_ix_i^{1/2} - (x\cdot z)\rho_i x_i^{-1/2}
\na\Phi)_{i=1}^n$ lies in $\operatorname{span}\{\rho^{1/2}\}$, we obtain
\begin{align*}
  \na w:B\na w &\ge K_B\sum_{i=1}^n M_i^{-1}\big|2\na x_i^{1/2}
	+ (z_ix_i^{1/2} - (x\cdot z)\rho_i x_i^{-1/2}\na\Phi\big|^2 \\
	&\ge K_1\sum_{i=1}^n|\na x_i^{1/2}|^2
	- K_2\sum_{i=1}^n \big|(z_ix_i^{1/2} - (x\cdot z)\rho_i x_i^{-1/2}\na\Phi\big|^2,
\end{align*}
where $K_1>0$ and $K_2>0$ depend on $M_1,\ldots,M_n$. Since $x_i$ and
$\rho_i x_i^{-1/2}=\rho_i^{1/2}/(c_{\rm tot}M_i)$ are bounded, the previous
inequality becomes
\begin{equation}\label{4.part1}
  \na w:B\na w \ge K_1\sum_{i=1}^n|\na x_i^{1/2}|^2 - K_3|\na\Phi|^2,
\end{equation}
where $K_3$ depends on $K_2$ and $z_i$.

In the following, let $K>0$ be a generic constant independent of $\eps$, $n$,
and $\tau$. We estimate the expression involving the boundary term
\begin{align*}
  \na w_D:B\na w &= \na w_D:A_0^{-1}D' \\
	&= \sum_{i,j=1}^{n-1} (A_0^{-1})_{ij}
	\bigg(\frac{z_i}{M_i}-\frac{z_n}{M_n}\bigg)\na\Phi_D\cdot
  \big(\na x_i + (z_ix_i - (z\cdot x)\rho_i)\na\Phi\big) \\
	&\le \frac{K}{\delta}
	+ \delta\sum_{i=1}^{n-1}\big|\na x_i + (z_ix_i - (z\cdot x)\rho_i)\na\Phi\big|^2,
\end{align*}
where $K>0$ depends on $\na\Phi_D$, $z_i$, $M_i$, and $A_0^{-1}$.
Since $0\le x_i\le 1$, we have $|\na x_i|^2=4x_i|\na x_i^{1/2}|^2\le4|\na x_i^{1/2}|^2$
and therefore, 
\begin{equation}\label{4.part2}
  \na w_D:B\na w \le \frac{K}{\delta} + 4\delta|\na x_i^{1/2}|^2
	+ \delta K|\na\Phi|^2.
\end{equation}
We infer from \eqref{4.part1} and \eqref{4.part2} that
$$
  \int_\Omega\na(w-w_D):B\na w dy
	\ge (K_1-4\delta)\sum_{i=1}^n\|\na x_i^{1/2}\|_{L^2(\Omega)}^2
  - K_3\|\na\Phi\|^2_{L^2(\Omega)} - \frac{K}{\delta}.
$$
By the boundedness of $c_i$, the elliptic estimate for the Poisson equation gives
\begin{equation}\label{4.Phi}
  \|\Phi\|_{H^1(\Omega)} \le K(1 + \|c_i\|_{L^2(\Omega)}) \le K.
\end{equation}
This proves the lemma.
\end{proof}

Combining the discrete entropy inequality \eqref{1.ei} and the estimate of
Lemma \ref{lem.diff} and summation over $k$ leads to the following result.

\begin{corollary}\label{coro.ei}
There exist constants $K_1>0$ and $K_2>0$, both independent of $\eps$, $n$,
and $\tau$, such that
\begin{equation}\label{4.Hk}
  H(\rho^k) + \tau K_1\sum_{j=1}^k\sum_{i=1}^n\|\na (x_i^k)^{1/2}\|_{L^2(\Omega)}^2
	+ \eps\tau\sum_{j=1}^k\|w^j-w_D\|_{L^2(\Omega)}^2
  \le \tau kK_2 + H(\rho^0).
\end{equation}
\end{corollary}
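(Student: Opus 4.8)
The plan is to combine the discrete entropy production inequality \eqref{1.ei} with the lower bound of Lemma \ref{lem.diff} and then to telescope in the time index $k$. First I would fix $k\ge 1$ and substitute the estimate of Lemma \ref{lem.diff} into the left-hand side of \eqref{1.ei}. Since the constants $K_1,K_2$ produced there are independent of $\eps$, $N$, and $\tau$, this already yields an inequality of the shape claimed in \eqref{4.Hk}, except that the right-hand side still carries the reaction term $\tau\int_\Omega\sum_i (z_i/M_i)r_i(x^k)(\Phi^k-\Phi_D)\,dy$ from \eqref{1.ei}, which I have to control.

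The one genuine computation is to bound that reaction term uniformly. Because $x^k(y)\in[0,1]^n$ and $r_i\in C^0([0,1]^n)$ by Assumption (A4), the values $r_i(x^k)$ are bounded by a constant depending only on the $r_i$. For the factor $\Phi^k-\Phi_D$ I would invoke the elliptic estimate \eqref{4.Phi} established in the proof of Lemma \ref{lem.diff}: testing the Galerkin Poisson equation \eqref{1.sch2} with $\Phi^k-\Phi_D\in P_N$ and using the Poincaré inequality (available since $\text{meas}(\Gamma_{\rm D})>0$) together with the uniform bound $0<\rho_i^k<1$ from Theorem \ref{thm.ex} — hence $|c_i^k|\le 1/\min_j M_j$ — gives $\|\Phi^k\|_{H^1(\Omega)}\le K$ with $K$ independent of $\eps$, $N$, $\tau$. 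Since $\Phi_D\in L^\infty(\Omega)\subset L^2(\Omega)$, the Cauchy--Schwarz inequality then bounds the reaction term by $\tau K$. Absorbing this into $K_2$, I arrive, for every $k$, at
\[
  H(\rho^k) + \tau K_1\sum_{i=1}^n\|\na (x_i^k)^{1/2}\|_{L^2(\Omega)}^2
  + \eps\tau\|w^k-w_D\|_{L^2(\Omega)}^2 \le H(\rho^{k-1}) + \tau K_2 .
\]

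Finally I would sum this inequality over $j=1,\dots,k$. The entropy terms telescope to $H(\rho^k)-H(\rho^0)$ (each $H(\rho^j)$ is finite, being bounded below by $-C$ because $s\mapsto s\log s\ge -1/e$ while $c_{\rm tot}^j$ and $x_i^j$ are bounded and $h_2\ge 0$), the gradient and $\eps$-terms accumulate with the common factor $\tau$, and the right-hand side becomes $\tau kK_2+H(\rho^0)$, which is exactly \eqref{4.Hk}. To close the argument I would note that $H(\rho^0)<\infty$: by Assumption (A2), $\int_\Omega\sum_i\rho_i^0\log\rho_i^0\,dy<\infty$, which is equivalent to finiteness of $\int_\Omega c_{\rm tot}^0\sum_i x_i^0\log x_i^0\,dy$ since $\rho_i^0$, $c_{\rm tot}^0$, $x_i^0$ differ only by factors bounded above and below, and $\Phi^0\in H^1(\Omega)$ makes the Dirichlet part of the entropy finite. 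I do not expect a serious obstacle here; the argument is essentially bookkeeping. The only point demanding care is that every constant — from Lemma \ref{lem.diff}, from \eqref{4.Phi}, and from absorbing the reaction term — be independent of $\eps$, $N$, and $\tau$, and it is precisely the structure-preserving bounds $0<\rho_i^k<1$ of Theorem \ref{thm.ex} that make this possible, by keeping the source term of the Poisson equation, and hence $\Phi^k$, uniformly bounded.
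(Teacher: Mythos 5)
Your proof is correct and follows essentially the same route as the paper: combine the discrete entropy production inequality \eqref{1.ei} with the lower bound of Lemma \ref{lem.diff}, absorb the reaction term using the uniform bounds $0<\rho_i^k<1$, the continuity of $r_i$ on $[0,1]^n$, and the elliptic estimate \eqref{4.Phi}, then sum over the time index. The details you supply for bounding the reaction term and for the finiteness of $H(\rho^0)$ are exactly the ones the paper leaves implicit.
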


{\em Step 2: limit $\eps\to 0$.}
For a fixed time step $k$, let $(w^\eps,\Phi^\eps)$ 
be a solution to \eqref{1.sch1}-\eqref{1.sch2} with $\rho^\eps=\rho(w^\eps,\Phi^\eps)$ 
and $x^\eps_i=\rho_i^\eps/(c_{\rm tot}^\eps M_i)$.
Estimates \eqref{4.Phi} and \eqref{4.Hk} yield the following uniform bounds:
\begin{align}
  \|\rho_i^\eps\|_{L^\infty(\Omega)} + \|x_i^\eps\|_{L^\infty(\Omega)} &\le 1, 
	\quad i=1,\ldots,n, \label{4.Linfty} \\
  \|x_i^\eps\|_{H^1(\Omega)} + \|\Phi^\eps\|_{H^1(\Omega)}
	+ \eps^{1/2}\|w_i^\eps\|_{L^2(\Omega)} \label{4.H1}
	&\le K,
\end{align}
where $K>0$ is independent of $\eps$ and $N$.
The bound for $x_i^\eps$ in $H^1(\Omega)$ is a consequence of the bound 
for $(x_i^\eps)^{1/2}$ in $H^1(\Omega)$ from \eqref{4.Hk} and the
uniform $L^\infty$ bound for $x_i^\eps$ from \eqref{4.Linfty}.
It follows that $c_{\rm tot}^\eps = \sum_{i=1}^n \rho_i^\eps/M_i$ is uniformly
bounded in $L^\infty(\Omega)$. Moreover, because of $\sum_{i=1}^n\rho_i^\eps=1$, 
$c_{\rm tot}^\eps\ge (\max_i M_i)^{-1}>0$
is uniformly positive. This shows that $\rho_i^\eps=c_{\rm tot}^\eps M_ix_i^\eps$
is uniformly bounded in $H^1(\Omega)$. Oberserving that the embedding
$H^1(\Omega)\hookrightarrow L^2(\Omega)$ is compact, 
there exist subsequences, which are not relabeled, such that as $\eps\to 0$,
\begin{align*}
  x_i^\eps\to x_i, \quad \rho_i^\eps\to\rho_i, \quad \Phi^\eps\to\Phi
	&\quad\mbox{strongly in }L^2(\Omega), \\
	x_i^\eps\rightharpoonup x_i, \quad \rho_i^\eps\rightharpoonup\rho_i,
	\quad \Phi^\eps\rightharpoonup\Phi &\quad\mbox{weakly in }H^1(\Omega), \\
	\eps w_i^\eps\to 0 &\quad\mbox{strongly in }L^2(\Omega).
\end{align*}
In view of the $L^\infty$ bounds for $(x_i^\eps)$ and $(\rho_i^\eps)$,
the strong convergences for these (sub-) sequences hold in $L^p(\Omega)$ for any
$p<\infty$. Consequently, $c_{\rm tot}^\eps\to c_{\rm tot}:=\sum_{i=1}^n\rho_i/M_i$ 
strongly in $L^2(\Omega)$, and
we can identify $\rho_i=c_{\rm tot}M_ix_i$ for $i=1,\ldots,n$. Furthermore, 
$$
  c_i^\eps=\rho_i^\eps/M_i \to c_i:= \rho_i/M_i \quad\mbox{strongly in }
	L^2(\Omega),\ i=1,\ldots,n.
$$
Recalling definition \eqref{1.Ji} of $D_i$, we have
\begin{equation}\label{4.conv.D}
  D^\eps_i = \na x_i^\eps + (z_ix_i^\eps - (z\cdot x^\eps)\rho_i^\eps)\na\Phi^\eps
	\rightharpoonup D_i := \na x_i + (z_ix_i - (z\cdot x)\rho_i)\na\Phi
\end{equation}
weakly in $L^q(\Omega)$ for any $q<2$ and $i=1,\ldots,n$. 
Since $(D_i^\eps)$ is bounded in $L^2(\Omega)$, there exists a subsequence
which converges to some function $\widetilde D_i$ weakly in $L^2(\Omega)$. 
By the uniqueness of the weak limits, we can identify $\widetilde D_i=D_i$. 
This shows that the convergence \eqref{4.conv.D} holds in $L^2(\Omega)$.
We deduce from the strong convergence of
$(x_i^\eps)$, the boundedness of $(x_i^\eps)$ in $L^\infty(\Omega)$, and
the continuity of $r_i$ that $r_i(x^\eps)\to r_i(x)$ strongly in $L^2(\Omega)$.

We know from Lemma \ref{lem.Ji} that 
$B(w^\eps)\na w^\eps = A_0^{-1}(\rho^\eps)(D^\eps)'$. As $A_0^{-1}(\rho)$ is uniformly
bounded for $\rho\in[0,1]^n$ and $(\rho^\eps)$ converges strongly to $\rho$, we
infer that $A_0^{-1}(\rho^\eps)\to A_0^{-1}(\rho)$ strongly in $L^2(\Omega)$;
the convergence holds even in every $L^p(\Omega)$ for $p<\infty$. Then,
because of \eqref{4.conv.D},
\begin{equation}\label{4.A0D}
  A_0^{-1}(\rho^\eps)(D^\eps)'\rightharpoonup A_0^{-1}(\rho)D'
	\quad\mbox{weakly in }L^q(\Omega)\mbox{ for all }q<2.
\end{equation}
In fact, since $A_0^{-1}(\rho^\eps)(D^\eps)')$ is bounded in $L^2(\Omega)$
and thus (up to a subsequence) weakly converging in $L^2(\Omega)$, the
convergence holds in $L^2(\Omega)$.

These convergences are sufficient to perform the limit $\eps\to 0$ in
\eqref{1.sch1}-\eqref{1.sch2}. We conclude that $(\rho^k,\Phi^k):=(\rho,\Phi)$ solves
\begin{align}
  & \frac{1}{\tau}\int_\Omega\big((\rho^k)'-(\rho^{k-1})'\big)\cdot\phi dy
	+ \int_\Omega\na\phi:A_0^{-1}(\rho^k)\na\rho^k dy
	= \int_\Omega r'(x^k)\cdot \phi dy, \label{4.gal3} \\
	& \lambda\int_\Omega\na\Phi^k\cdot\na\theta dy 
	= \int_\Omega\bigg(\sum_{i=1}^n z_ic_i^k + f(y)\bigg)\theta dy \label{4.gal4}
\end{align}
for all $\phi\in V_N$, $\theta\in P_N$.

{\em Step 3: limit $N\to \infty$.} Let $(\rho^N,\Phi^N)$ be a solution to
\eqref{4.gal3}-\eqref{4.gal4}. Estimates \eqref{4.Linfty}-\eqref{4.H1} are independent
of $N$. Thus, we can exactly argue as in step 2 and obtain
limit functions $(x,\rho,\Phi)$ and $c_i=c_{\rm tot}M_ix_i$ for $i=1,\ldots,n$
as $N\to\infty$.
These functions satisfy \eqref{4.gal3}-\eqref{4.gal4} for all $\phi\in V_N$
and $\theta\in P_N$ and for all $N\in\N$. 
The union of all $V_N$ is dense in $H^1(\Omega;\R^{n-1})$ and the union of all
$P_N$ is dense in $H_D^1(\Omega)$. Thus, by a density argument, system
\eqref{4.gal3}-\eqref{4.gal4} holds for all test functions $\phi\in H^1(\Omega;\R^{n-1})$
and $\theta\in H_D^1(\Omega)$.

{\em Step 4: limit $\tau\to 0$.} Let $(\rho^k,\Phi^k)$ be a solution to
\eqref{4.gal3}-\eqref{4.gal4} with test functions $\phi\in H^1(\Omega;\R^{n-1})$
and $\theta\in H_D^1(\Omega)$. Then $\rho_i^k=c_{\rm tot}^kM_ix_i^k$
and $c_i^k=\rho^k_i/M_i$ for $i=1,\ldots,n$. We set
$$
  \rho^\tau_i(y,t) = \rho_i^k(y), \quad x_i^\tau(y,t) = x_i^k(y),
	\quad c_i^\tau(y,t) = c_i^k(y), \quad \Phi^\tau(y,t) = \Phi^k(y)
$$
for $y\in\Omega$, $t\in((k-1)\tau,k\tau]$, $i=1,\ldots,n$ and introduce
the shift operator $(\sigma_\tau\rho^\tau)(y,t)=\rho^\tau(y)$ for $y\in\Omega$
and $t\in((k-1)\tau,k\tau]$. Finally, we set
$D^\tau_i=\na x_i^\tau + (z_ix_i^\tau-(z\cdot x^\tau)\rho^\tau_i)\na\Phi^\tau$
and $T=m\tau$ for some fixed $m\in\N$.
Then we can write system \eqref{4.gal3}-\eqref{4.gal4} as
\begin{align}
  & \frac{1}{\tau}\int_0^T\int_\Omega\big((\rho^\tau)'-\sigma_\tau(\rho^\tau)'\big)
	\cdot\phi dydt + \int_0^T\int_\Omega\na\phi:A_0^{-1}(\rho^\tau)(D^\tau)' dydt 
	\nonumber \\
	&\phantom{xx}{}= \int_0^t\int_\Omega r'(x^\tau)\cdot\phi dydt, \label{4.tau1} \\
  & \lambda\int_\Omega\na\Phi^\tau\cdot\na\theta dy
	= \int_\Omega\bigg(\sum_{i=1}^n z_ic_i^\tau + f(y)\bigg)\theta dy \label{4.tau2}
\end{align}
for all piecewise constant functions $\phi:(0,T)\to H^1(\Omega;R^{n-1})$
and $\theta:(0,T)\to H_D^1(\Omega)$. The entropy inequality \eqref{4.Hk},
formulated in terms of $(\rho^\tau,\Phi^\tau)$, provides us with further uniform
bounds since the right-hand side of \eqref{4.Hk} does not depend on $\tau$:
\begin{align}
  \|\rho_i^\tau\|_{L^\infty(\Omega_T)} 
	+ \|x_i^\tau\|_{L^\infty(\Omega_T)} &\le K, \label{4.est1} \\
	\|\rho_i^\tau\|_{L^2(0,T;H^1(\Omega))} 
	+ \|x_i^\tau\|_{L^2(0,T;H^1(\Omega))} + \|\Phi^\tau\|_{L^2(0,T;H^1(\Omega))}
	&\le K, \label{4.est2}
\end{align}
where we have set $\Omega_T=\Omega\times(0,T)$.
As a consequence, $(D_i^\tau)$ is bounded in $L^2(0,T;H^1(\Omega))$. 

It remains to derive a uniform estimate for the discrete time derivative of
$\rho^\tau$. Taking into account the uniform bound for $A_0^{-1}(\rho^\tau)$,
it follows that
\begin{align*}
  \frac{1}{\tau}\bigg|\int_0^t\int_\Omega&\big((\rho^\tau)'-\sigma_\tau(\rho^\tau)'\big)
	\cdot\phi dydt\bigg|
	\le \int_0^T\|\na\phi\|_{L^2(\Omega)}\|A_0^{-1}(\rho^\tau)\|_{L^\infty(\Omega)}
	\|(D^\tau)'\|_{L^2(\Omega)}dt \\
	&\phantom{xx}{}+ \int_0^T\|r'(x^\tau)\|_{L^2(\Omega)}\|\phi\|_{L^2(\Omega)} dt
	\le C\|\phi\|_{L^2(0,T;H^1(\Omega))}.
\end{align*}
As the piecewise constant functions $\phi:(0,T)\to H^1(\Omega;\R^{n-1})$ are
dense in $L^2(0,T;$ $H^1(\Omega;\R^{n-1}))$, this estimate also holds for all
$\phi\in L^2(0,T;H^1(\Omega;\R^{n-1}))$, and we conclude that
$$
  \tau^{-1}\big\|(\rho^\tau)'-\sigma_\tau(\rho^\tau)'\big\|_{L^2(0,T;H^1(\Omega)')}
	\le K, \quad i=1,\ldots,n-1.
$$
This estimate also holds for $i=n$ since $\rho_n^\tau=1-\sum_{i=1}^{n-1}\rho_i^\tau$.

By the Aubin-Lions lemma in the version of \cite{DrJu12}, there exists a
subsequence of $(\rho^\tau)$ which is not relabeled such that, as $\tau\to 0$,
$$
  \rho_i^\tau\to \rho_i\quad\mbox{strongly in }L^2(\Omega_T),\ i=1,\ldots,n.
$$
In view of the $L^\infty$ bound \eqref{4.est1} for $\rho^\tau$, 
this convergence also holds in $L^p(\Omega_T)$ for any $p<\infty$. Furthermore, by
\eqref{4.est2}, we have up to subsequences,
\begin{align*}
  x^\tau_i\rightharpoonup x_i, \quad 
	\Phi^\tau\rightharpoonup\Phi &\quad\mbox{weakly in }L^2(0,T;H^1(\Omega)), \\
	\tau^{-1}(\rho^\tau_i-\sigma_\tau(\rho^\tau_i))\rightharpoonup\pa_t\rho_i
	&\quad\mbox{weakly in }L^2(0,T;H^1(\Omega)').
\end{align*}
In particular, $D^\tau_i\rightharpoonup D_i$ weakly in $L^2(\Omega_T)$,
and we can identify $D_i=\na x_i + (z_ix_i-(z\cdot x)\rho_i)\na\Phi$.
The strong convergence of $(\rho^\tau)$ and the weak convergence of $(D^\tau_i)$
imply that
$$
  A_0^{-1}(\rho^\tau)(D^\tau)'\rightharpoonup A_0^{-1}(\rho)D'
	\quad\mbox{weakly in }L^q(\Omega_T),\ q<2.
$$
Again, since $(A_0^{-1}(\rho^\tau)(D^\tau)')$ is bounded in $L^2(\Omega_T)$,
this convergence holds in $L^2(\Omega_T)$. Furthermore,
$r'(x^\tau)\to r'(x)$ strongly in $L^2(\Omega_T)$. Therefore,
we can pass to the limit $\tau\to 0$ in \eqref{4.tau1}-\eqref{4.tau2}
yielding \eqref{1.weak1}-\eqref{1.weak2}.

Finally, the assumption $\rho_i^0\ge\eta>0$ can be relaxed to $\rho_i^0\ge 0$
by passing to the limit $\eta\to 0$. This is carried out in \cite[Section 3.2]{ChJu15}
and we refer to this reference for details.


\section{Numerical experiments}\label{sec.numer}

In this section, some numerical experiments based on scheme 
\eqref{1.sch1}-\eqref{1.sch2} in one space dimension are presented.

\subsection{Discretization and iteration procedure}

Let $\Omega=(0,1)$ be divided into $n_p\in\N$ uniform subintervals of
length $h=1/n_p$. We use uniform time steps with time step size $\tau>0$
and linear finite elements. We impose Dirichlet boundary condition for
the electric potential $\Phi$. Given the variables $(w,\Phi)$, 
the molar fractions $x_i$ are computed from the
fixed-point problem (see the proof of Lemma \ref{lem.inv.wx})
\begin{equation}\label{4.fpp}
  f(s) = \sum_{i=1}^{n-1}(1-s)^{M_i/M_n}\exp\bigg[
	M_iw_i-M_i\bigg(\frac{z_i}{M_i}-\frac{z_n}{M_n}\bigg)\Phi_0\bigg], \quad
	s\in[0,1],
\end{equation}
with unique solution $s_0\in(0,1)$.
The molar fractions are recovered froms \eqref{3.aux1},
$$
  x_i = (1-s_0)^{M_i/M_n}\exp\bigg[
	M_iw_i-M_i\bigg(\frac{z_i}{M_i}-\frac{z_n}{M_n}\bigg)\Phi\bigg], \quad
	i=1,\ldots,n-1,
$$
and $x_n=1-s_0$. Then we set (see Lemma \ref{lem.inv.rhox})
$c_{\rm tot}=\sum_{i=1}^n (M_ix_i)^{-1}$ and $\rho_i=c_{\rm tot}M_ix_i$
for $i=1,\ldots,n$.

Instead of solving the nonlinear discrete system \eqref{1.sch1}-\eqref{1.sch2}
by a full Newton method, we employ a linearized semi-implicit approach,
i.e., we linearize $\rho(w,\Phi)$ and use the previous time step
in the diffusion matrix $B(w)$. More precisely, let $\overline{w}\in V_N$
and $\overline\Phi\in P_N$ be given. We linearize $\rho(w,\Phi)$ by 
$$
  \rho(\overline{w},\overline\Phi)+\na_{(w,\Phi)}\rho'(\overline{w},\overline\Phi)
  \cdot(w-\overline{w},\Phi-\overline\Phi).
$$
This leads to the problem in the variable $\zeta=(w-\overline{w},\Phi-\overline\Phi)$:
\begin{equation}\label{4.linsys}
  L(\zeta,\phi) = F(\phi), \quad K(\zeta_n,\theta) = G(\theta)
  \quad\mbox{for all }\phi\in V_N,\ \theta\in P_N,
\end{equation}
where
\begin{align*}
  L(\zeta,\phi) &= \int_\Omega\na_{(w,\Phi)}\rho'(\overline{w},\overline\Phi)\cdot
	(\zeta,\phi)dy + \tau\int_\Omega\pa_x\phi\cdot B(\overline{w},\overline{\Phi})
	\pa_x\zeta dy + \eps\tau\int_\Omega(\zeta-w_D)\cdot\phi dy, \\
	F(\phi) &= -\int_\Omega\big(\rho'(\overline{w},\overline\Phi)
	- \rho'(w^{k-1},\Phi^{k-1})\big)\cdot\phi dy
	- \tau\int_\Omega\pa_x\phi\cdot B(\overline{w},\overline\Phi)\pa_x \overline{w}dy, \\
  K(\zeta_n,\theta) &= \lambda\int_\Omega\pa_x\zeta_n\pa_x\phi dy
	- \int_\Omega\sum_{i=1}^n\frac{z_i}{M_i}
	\na_{(w,\Phi)}\rho_i(\overline{w},\overline\Phi)\cdot\zeta\theta dy, \\
	G(\theta) &= -\lambda\int_\Omega\pa_x\overline\Phi\pa_x\theta dy
	+ \int_\Omega\bigg(\sum_{i=1}^nz_i\frac{\rho_i(\overline{w},\overline\Phi)}{M_i}
	+f(y)\bigg)\theta dy.
\end{align*}
The iteration with starting point $(w_h^{(0)},\Phi_h^{(0)}):=(w^{k-1},\Phi^{k-1})$
is then defined by $(w_h^{(m+1)},$ $\Phi_h^{(m+1)}):=(\overline{w},\overline\Phi)
+\zeta$ for $m\ge 0$. The iteration stops when $\|\zeta\|_{\ell^\infty}<\eps_{\rm tol}$
for some tolerance $\eps_{\rm tol}>0$ or if $m\ge m_{\rm max}$ for a maximal number of
iterations. We summarize the scheme in Algorithm 1. 

\begin{algorithm}
\label{algo}
\begin{algorithmic}[1]
\Procedure{Maxwell-Stefan system in entropy variables}{}
\State \text{Set} 
$(\overline{w}_h^{(0)},\overline{\Phi}_h^{(0)}) = (w^{k-1},\Phi^{k-1})$, 
$\rho^{(0)}_h = \rho'(\overline{w}_h^{0},\overline{\Phi}_h^{0})$, 
$x^{(0)}_h = \rho_h^{(0)}/(M_i c_h^{(0)})$, 
$c_h^{(0)} = \sum_{i=1}^n (\rho_h^{(0)})_i/M_i$, $m=0$, $\eps>0$, and $m_{\text{max}}$.
\While{$err > \eps$}
	\State Solve linear system \eqref{4.linsys} with solution $\zeta$.
	\State Set $(\overline{w}^{(m+1)}_h,\overline{\Phi}^{(m+1)}_h)
	:= (\overline{w}_h^{m},\overline{\Phi}_h^{m}) + \zeta$.
	\State Solve the fixed-point problem \eqref{4.fpp}  with solution $s_0$.
	\State Compute $x^{(m+1)}_h$ and $\rho^{(m+1)}_h$.
	\State Set $\text{err}:= \|(\overline{w}_h^{(m+1)},\overline{\Phi}_h^{(m+1)})
	- (\overline{w}_h^{(m)},\overline{\Phi}_h^{(m)})\|_{\ell^{\infty}}$.
	\State $(m+1) \gets (m)$.
	\If{$m>m_{\text{max}}$ \mbox{or }$\text{err}<\eps$}
		\State \textbf{Break}
	\EndIf
\EndWhile
\EndProcedure
\end{algorithmic}
\caption{(Pseudo-code for the finite-element scheme in entropy variables.)}
\end{algorithm}

The linear system
\eqref{4.linsys} and the fixed-point problem \eqref{4.fpp} are solved using MATLAB.
We choose the numerical parameters $h=10^{-2}$, $\tau=10^{-3}$, 
$\eps_{\rm tol}=10^{-10}$, 
and $\eps=2^{-52}\approx 2.2204\cdot 10^{-16}$ (the scheme works also for
$\eps=0$). 

We have compared our results with the solutions from a finite-element scheme
derived from the original system in the variables $\rho_i$ and a Picard
iteration procedure for the nonlinear discrete system. It turned out that
the results are basically the same, i.e.\ $\|\rho_i-\rho_i(w,\Phi)\|_{L^\infty(\Omega)}
\le 10^{-10}$.


\subsection{Numerical examples}

In all numerical examples, we neglect reaction terms and choose the diffusivities
according to \cite{BGS12,GaMa92}:
$D_{12}=0.833$, $D_{13}=0.680$, and $D_{23}=0.168$ for $n=3$.
The charges are given by $z_1=z_2=1$ and $z_3=0$ and 
the initial data is defined as in \cite{BGS12}:
$$
  \rho_1^0(y) = \left\{\begin{array}{ll}
	0.7 & \mbox{for }y<0.25, \\
	-2(0.7-\eta)y - 2(0.25\eta - (0.7\cdot 0.75)) &\mbox{for }0.25\le y<0.75, \\
  \eta &\mbox{for }0.75\le y\le 1
  \end{array}\right.
$$
for $\eta=10^{-5}$, $\rho_2^0(y)=0.2$, and $\rho_3^0(y)=(1-\rho_1^0-\rho_2^0)(y)$
for $y\in\Omega=(0,1)$. 

For the first example, the boundary conditions
for the electric potential are supposed to be in equilibrium, i.e.\
$\Phi(y)=0$ for $y\in\{0,1\}$. The dynamics of the particle densities and the electric
potential are shown in Figure \ref{fig.ex1}. The solution at time $t=17$ is
essentially stationary and, in fact, in equilibrium. 
Because of the choice of the parameters,
the stationary solution is symmetric around $x=\frac12$. 

\begin{figure}[ht]
\includegraphics[width=75mm]{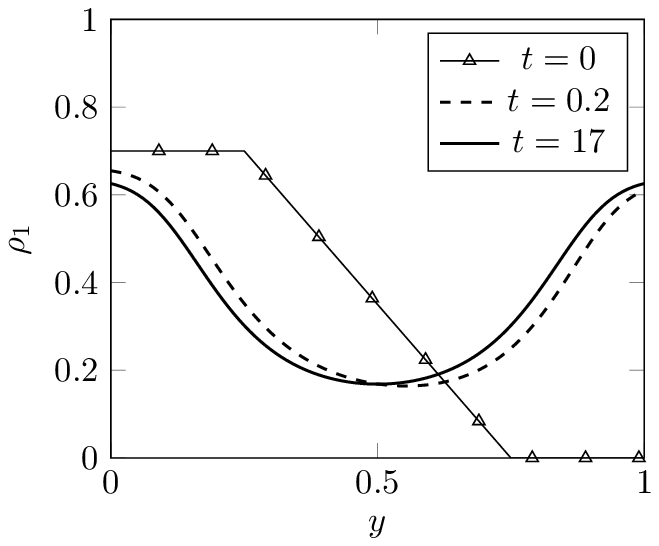}
\includegraphics[width=75mm]{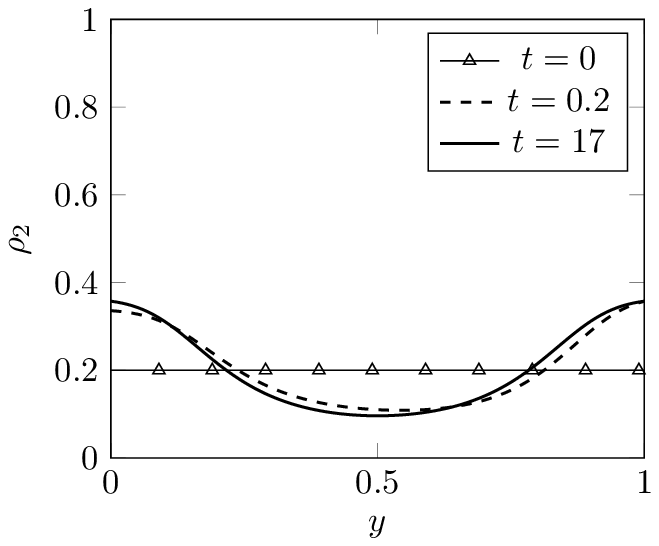}
\includegraphics[width=75mm]{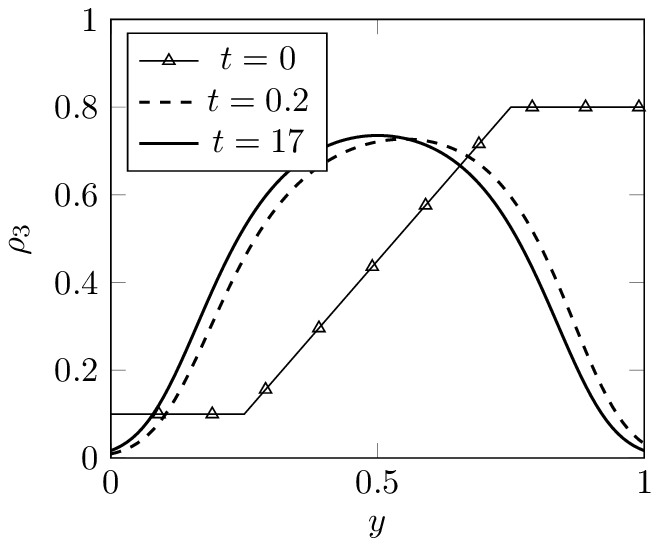}
\includegraphics[width=75mm]{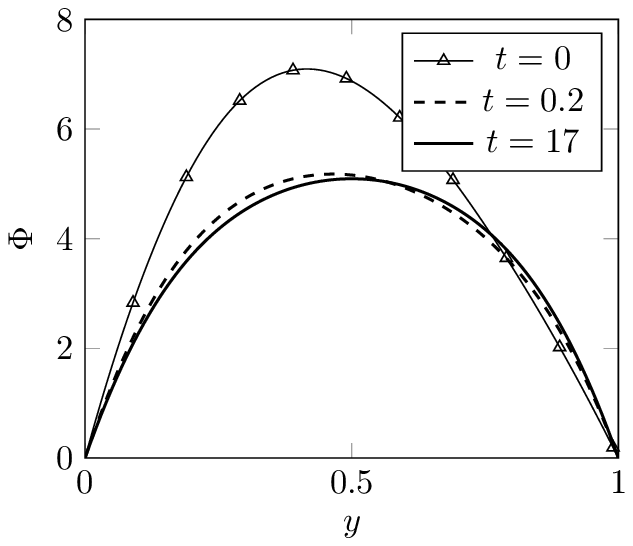}
\caption{Example 1: Particle densities $\rho_i$ and electric potential for molar masses
$M_1=M_2=M_3=1$ versus position at various times. 
The boundary conditions for the electric potential are in equilibrium.}
\label{fig.ex1}
\end{figure}

The situation changes
drastically when the molar masses are different (example 2). Figure \ref{fig.ex2} shows
the stationary solutions with the same parameters as in the previous example 
except $M_1=6$. Here, the discrete relative entropy is defined by 
$$
  H^*(\rho^k_h) = \int_0^1\bigg(c_{{\rm tot},h}^k\sum_{i=1}^n
	(x_h^k)_i\log\frac{(x_h^k)_i}{(x_h^\infty)_i}
	+ \frac{\lambda}{2}|\na(\Phi_h^k-\Phi_h^\infty)|^2\bigg)dy,
$$
where $(\rho_h^k,\Phi_h^k)$ is the finite-element solution at time $k\tau$
and $(x^\infty_h,\Phi_h^\infty)$ is the stationary solution. The integral
and gradients are computed by the trapezoidal and gradient routines of
MATLAB. The semi-logarithmic plot of the relative entropy shows that the entropy
converges to zero exponentially fast.

\begin{figure}[ht]
\includegraphics[width=75mm]{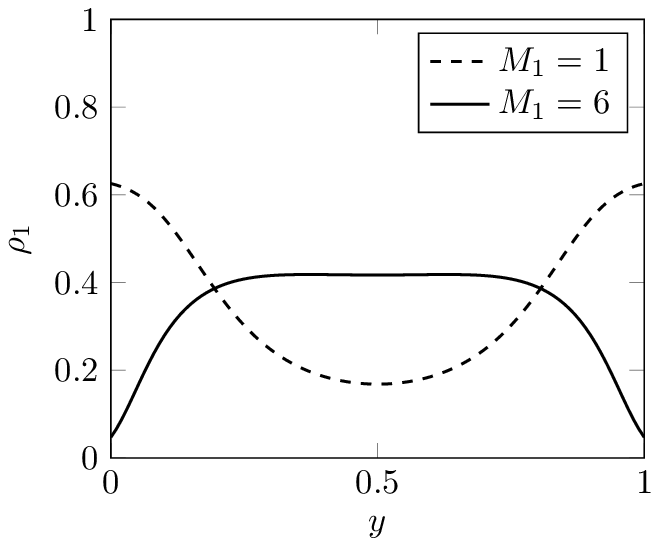}
\includegraphics[width=75mm]{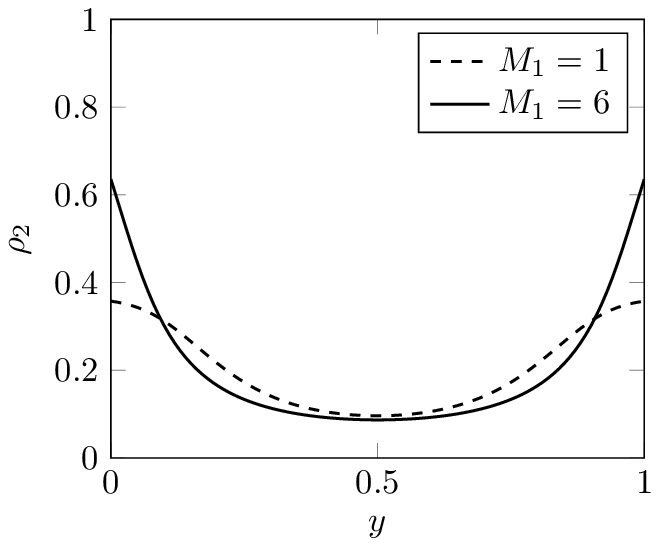}
\includegraphics[width=75mm]{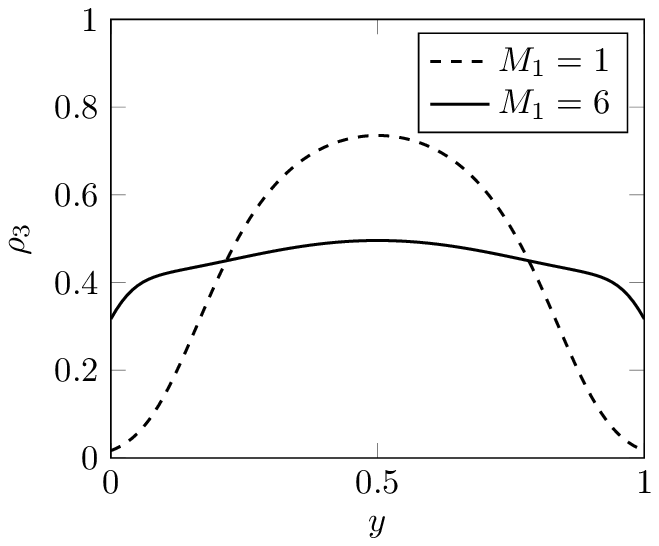}
\includegraphics[width=75mm]{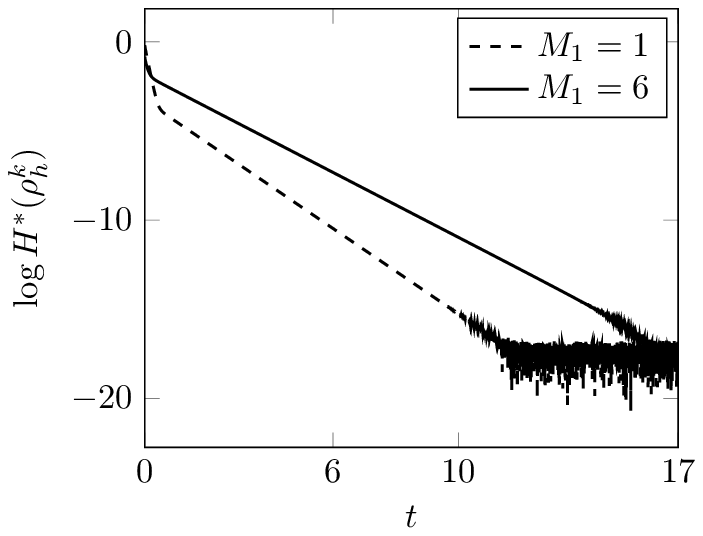}
\caption{Example 2: Particle densities $\rho_i$ at time $t=4$ versus position
and relative entropy (bottom right) for molar masses $M_1=6$ and $M_2=M_3=1$.
The boundary conditions for the electric potential are in equilibrium.}
\label{fig.ex2}
\end{figure}

For example 3, we choose the same initial conditions and parameters as before,
but we take non-equilibrium boundary data $\Phi(0)=10$, $\Phi(1)=0$. The
solutions at time $t=8$ for various molar masses $M_1$ are displayed in Figure
\ref{fig.ex3}. Since $\rho_1$ and $\rho_2$ have both positive charge and the
potential on the left boundary is positive, both species avoid the left
boundary and move to the right.

\begin{figure}[ht]
\includegraphics[width=75mm]{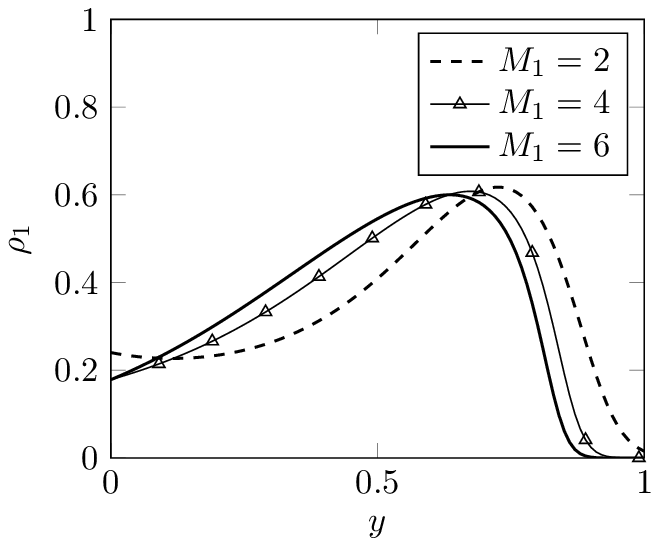}
\includegraphics[width=75mm]{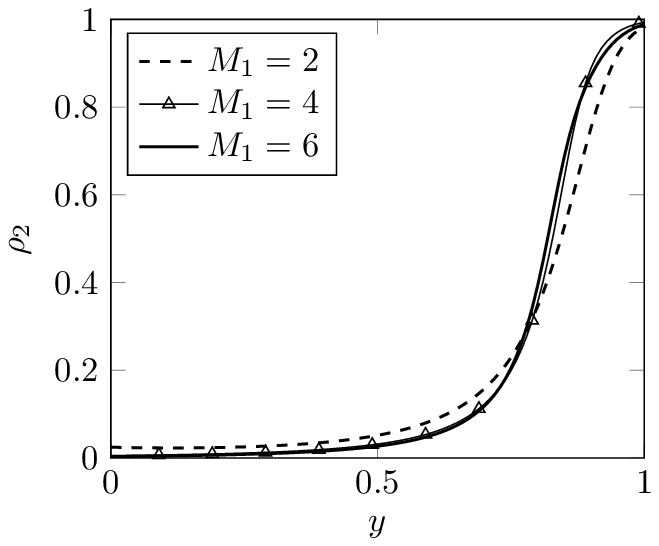}
\includegraphics[width=75mm]{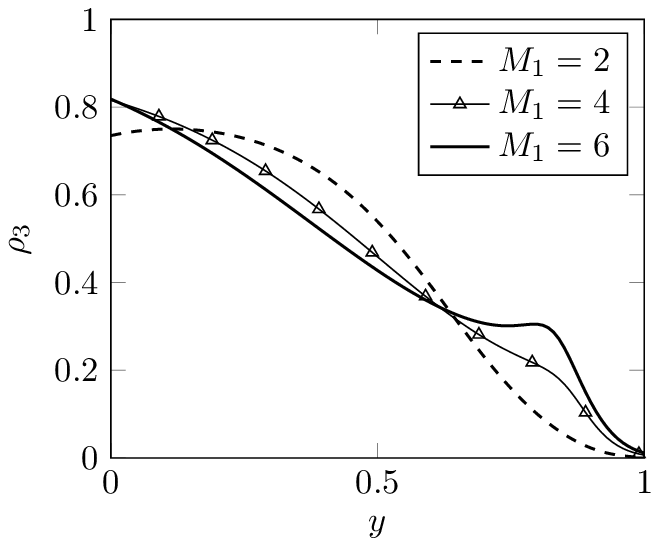}
\includegraphics[width=75mm]{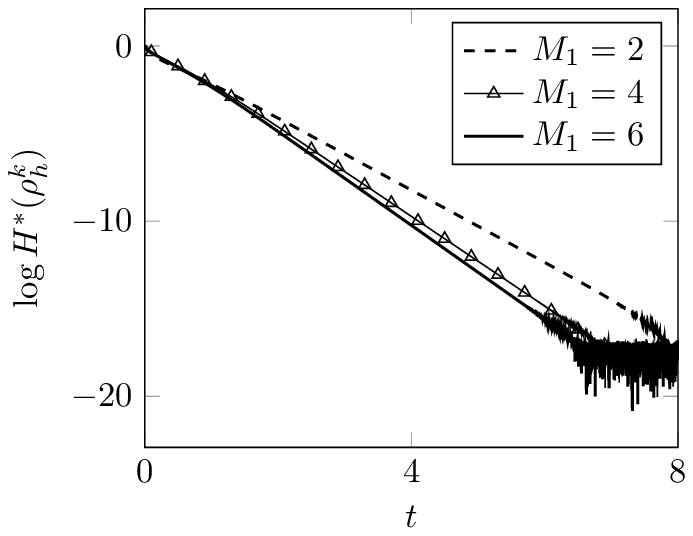}
\caption{Example 3: Particle densities $\rho_i$ at time $t=8$ versus position
and relative entropy (bottom right) for various molar masses $M_1$.
The boundary conditions for the electric potential are not in equilibrium.}
\label{fig.ex3}
\end{figure}

In example 4, we interchange the roles of $M_1$ and $M_2$, i.e.,
we choose $M_1=1$ and $M_2\in\{2,4,6\}$. We observe in Figure \ref{fig.ex4}
that the first species
is more concentrated at the right boundary while in the previous example,
this holds true for the second species.

\begin{figure}[ht]
\includegraphics[width=75mm]{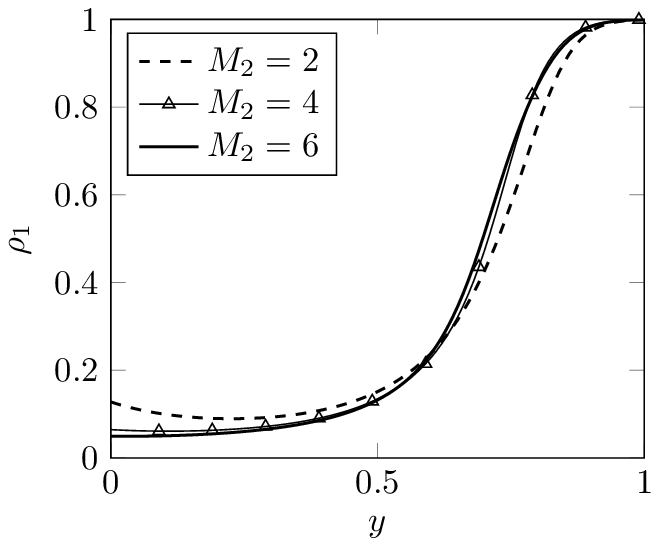}
\includegraphics[width=75mm]{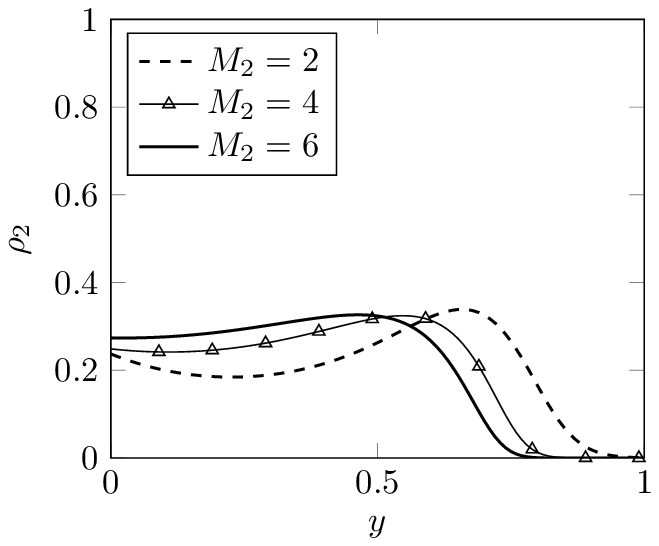}
\includegraphics[width=75mm]{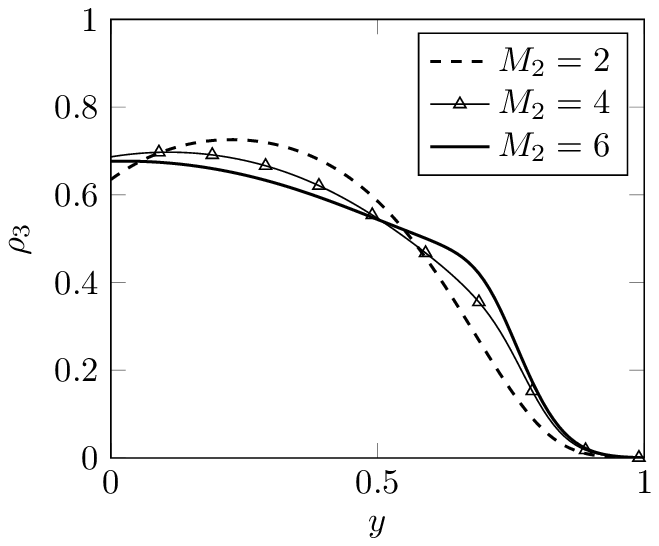}
\includegraphics[width=75mm]{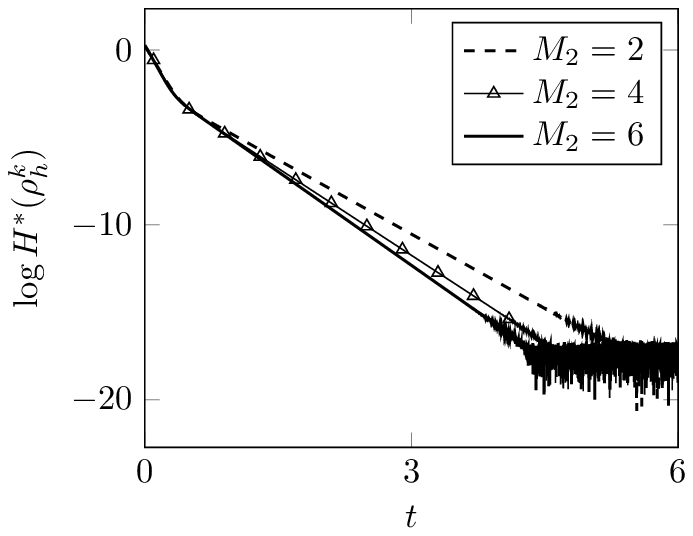}
\caption{Example 4: Particle densities $\rho_i$ at time $t=8$ versus position
and relative entropy (bottom right) for various molar masses $M_2$.
The boundary conditions for the electric potential are not in equilibrium.}
\label{fig.ex4}
\end{figure}

The previous examples show that the convergence rate to equilibrium
strongly depends on the ratio of the molar masses. It turns out that this
effect is triggered by the drift term, and without electric field, the
convergence rates are similar for different molar masses. This behavior
can be observed in Figure \ref{fig.ex5} (example 5), 
where we have taken the same parameters
as in the previous example but neglect the electric field. In this situation,
the steady state is constant in space and explicitly computable; indeed, we have
$\rho_i^\infty = \mbox{mean}(\Omega)^{-1}\|\rho_i^0\|_{L^1(\Omega)}$. Note
that the steady state in the previous examples is not constant.

\begin{figure}[ht]
\includegraphics[width=75mm]{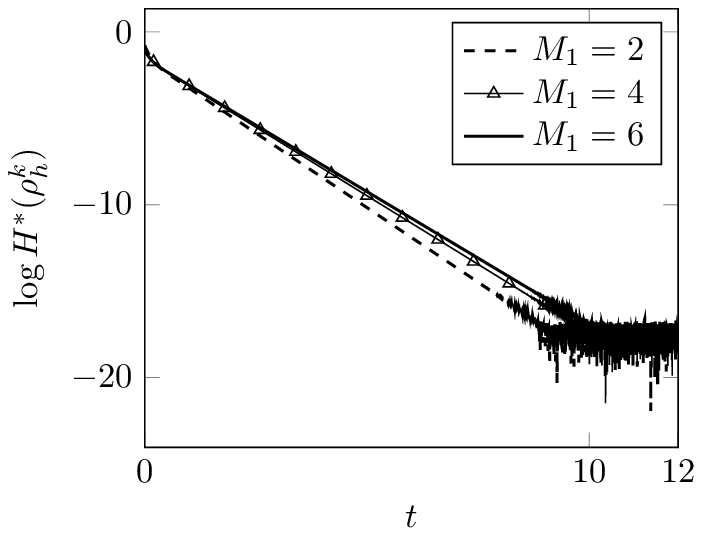}
\caption{Example 5: 
Semi-logarithmic plot of the relative entropy $H^*(\rho_h^k)$ versus time, 
without electric potential and for different molar masses.}
\label{fig.ex5}
\end{figure}

Finally, we compute the numerical convergence rate when the grid size tends
to zero for the situation of example 3 (non-equilibrium boundary conditions for the 
potential). We choose the time $t=0.01$ and the time step size $\tau=10^{-4}$. 
The solutions are computed on nested meshes with grid sizes
$h\in\{0.01, 0.005, 0.0025, 0.0006, 0.0001\}$ and compared to the reference
solution, computed on a very fine mesh with 25601 elements ($h\approx 4\cdot 10^{-5}$).
As expected, we observe a second-order convergence in space; see Figure
\ref{fig.conv}.

\begin{figure}[ht]
\includegraphics[width=75mm]{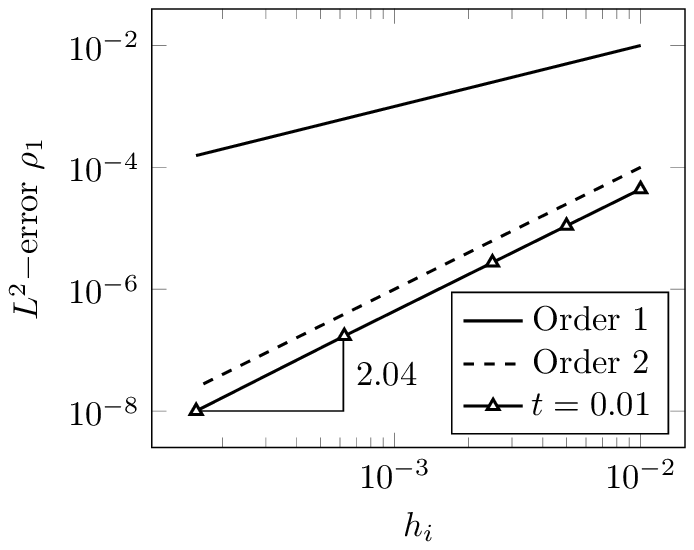}
\includegraphics[width=75mm]{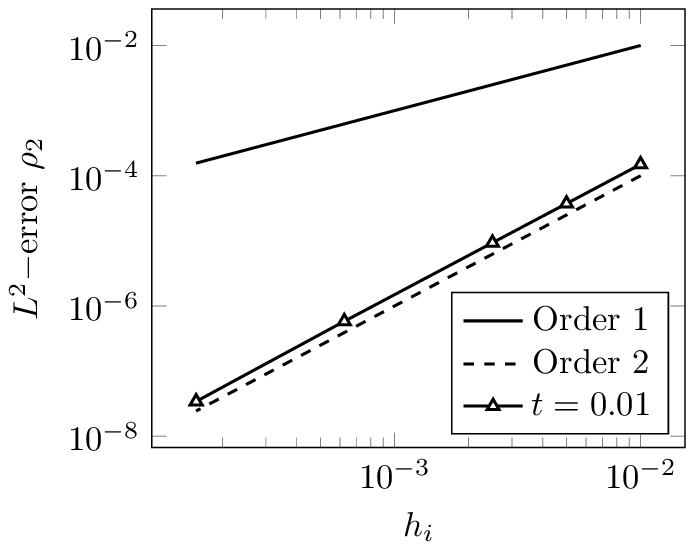}
\includegraphics[width=75mm]{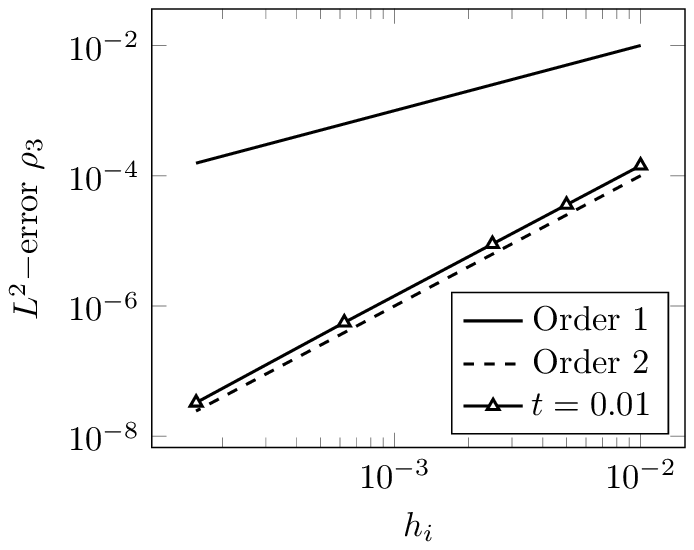}
\includegraphics[width=75mm]{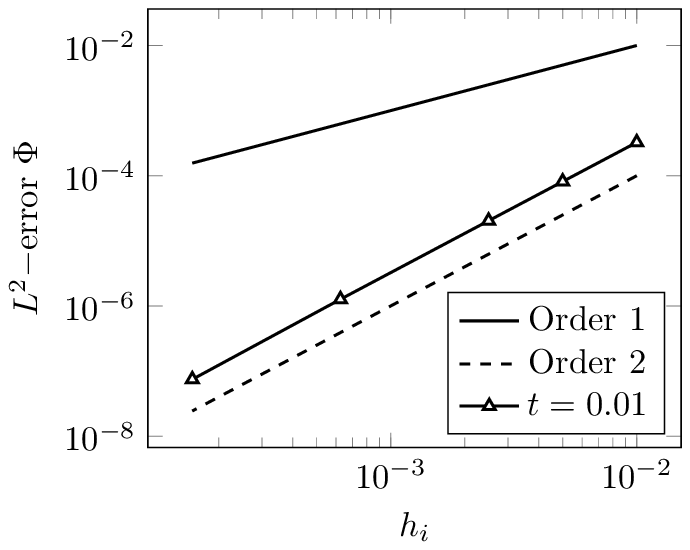}
\caption{Discrete $L^2$-error relative to the reference solution for the 
densities and the potential (bottom right) at time $t=0.01$.}
\label{fig.conv}
\end{figure}



\begin{thebibliography}{11}

\bibitem{Bot11} D.~Bothe. On the Maxwell-Stefan equations to multicomponent 
diffusion. In: J.~Escher et al. (eds). {\em Parabolic Problems. Progress in 
Nonlinear Differential Equations and their Applications}, pp.~81-93. 
Springer, Basel, 2011.

\bibitem{BoDr15} D.~Bothe and W.~Dreyer. Continuum thermodynamics of chemically 
reacting fluid mixtures. {\em Acta Mech.} 226 (2015), 1757--1805.

\bibitem{BGS12} L.~Boudin, B.~Grec, and F.~Salvarani. A mathematical and numerical 
analysis of the Maxwell--Stefan diffusion equations. 
{\em Discrete Cont. Dyn. Sys. B} 17 (2012), 1427-1440.

\bibitem{BGPS13} L.~Boudin, B.~Grec, M.~Pavi\'c, and F.~Salvarani. 
Diffusion asymptotics of a kinetic model for gaseous mixtures.
{\em Kinetic Related Models} 6 (2013), 137-157.

\bibitem{CaCa08} B.~Carnes and G.~Carey. Local boundary value problems for the error 
in FE approximation of non-linear diffusion systems.
{\em Intern. J. Numer. Meth. Engrg.} 73 (2008), 665-684.

\bibitem{ChJu15} X.~Chen and A.~J\"ungel. Analysis of an incompressible 
Navier--Stokes--Maxwell--Stefan system. {\em Commun. Math. Phys.} 340 (2015), 471--497

\bibitem{DJT18} E.~Daus, A.~J\"ungel, and B.-Q.~Tang. Exponential time decay of
solutions to reaction-cross-diffusion systems of Maxwell--Stefan type. Submitted for 
publication, 2018. arXiv:1802.10274.

\bibitem{DMB15} K.~Dieter-Kisling, H.~Marschall, and D.~Bothe. Numerical method for 
coupled interfacial surfactant transport on dynamic surface meshes of general topology.
{\em Computers \& Fluids} 109 (2015), 168-184.

\bibitem{DrJu12} M.~Dreher and A.~J\"ungel. Compact families of piecewise constant 
functions in $L^p(0,T;B)$. {\em Nonlin. Anal.} 75 (2012), 3072-3077.

\bibitem{DDGG17} W.~Dreyer, P.-E.~Druet, P.~Gajewski, and C.~Guhlke. Analysis of 
improved Nernst-Planck-Poisson models of compressible isothermal electrolytes. 
Part I: Derivation of the model and survey of the results.
WIAS Berlin, Germany, preprint no.\ 2395, 2017. 

\bibitem{DGM13} W.~Dreyer, C.~Guhlke, and R.~M\"uller. Overcoming the shortcomings of 
the Nernst-Planck-Poisson model. {\em Phys. Chem. Chem. Phys.} 15 (2013), 7075-7086.

\bibitem{DuTo62} J.~Duncan and H.~Toor. An experimental study of three component gas 
diffusion. {\em AIChE J.} 8 (1962), 38-41.

\bibitem{GaMa92} V.~Galkin and N.~Makashev. Modification of the first approximation 
of the Chapman-Enskog method for a gas mixture. {\em Fluid Dynam.} 27 (1993), 590-596.
Translated from {em Izv. Ross. Akad. Nauk Mekh. Zhidk. Gaza} 4 (1992), 178-185 
(Russian).

\bibitem{Gei15} J.~Geiser. Iterative solvers for the Maxwell-Stefan diffusion 
equations: Methods and applications in plasma and particle transport. 
{\em Cogent Math.} 2 (2015), 1092913, 16 pages.

\bibitem{Gio99} V.~Giovangigli. {\em Multicomponent Flow Modeling}. Birkh\"auser, 
Basel, 1999.

\bibitem{GiMa98} V.~Giovangigli and M.~Massot. The local Cauchy problem for
multicomponent flows in full vibrational non-equilibrium. {\em Math. Meth. Appl. Sci.}
21 (1998), 1415-1439.

\bibitem{GiMa98a} V.~Giovangigli and M.~Massot. Asymptotic stability of equilibrium 
states for multicomponent reactive flows. {\em Math. Models Meth. Appl. Sci.} 8
(1998), 251-297.

\bibitem{HMPW17} M.~Herberg, M.~Meyries, J.~Pr\"uss, and M.~Wilke. Reaction-diffusion 
systems of Maxwell--Stefan type with reversible mass-action kinetics.
{\em Nonlin. Anal.} 159 (2017), 264--284.

\bibitem{HuSa17} H.~Hutridurga and F.~Salvarani. Maxwell--Stefan diffusion asymptotics 
for gas mixtures in non-isothermal setting. {\em Nonlin. Anal.} 159 (2017), 285-297.

\bibitem{HuSa18} H.~Hutridurga and F.~Salvarani. Existence and uniqueness analysis of 
a non-isothermal cross-diffusion system of Maxwell-Stefan type. 
{\em Appl. Math. Lett.} 75 (2018), 108-113.

\bibitem{JuSt13} A.~J\"ungel and I.~Stelzer. Existence analysis of
Maxwell-Stefan systems for multicomponent mixtures.
{\em SIAM J. Math. Anal.} 45 (2013), 2421-2440.

\bibitem{LeAn10} E.~Leonardia and C.~Angeli. On the Maxwell--Stefan approach to 
diffusion: a general resolution in the transient regime for one-dimensional systems.
{\em J. Phys. Chem. B} 114 (2010), 151-164.

\bibitem{LVM92} J.-P.~Loos, P.~Verheijen, and J.~Moulin. Numerical simulation of the 
generalized Maxwell--Stefan model for multicomponent diffusion in microporous sorbents.
{\em Collect. Czech. Chem. Commun.} 57 (1992), 687-697.

\bibitem{MaTe15} M.~Marion and R.~Temam. Global existence for fully nonlinear 
reaction-diffusion systems describing multicomponent reactive flows. 
{\em J. Math. Pures Appl.} 104 (2015), 102-138.

\bibitem{Max66} C.~Maxwell. On the dynamical theory of gases. {\em Phil. Trans.
Roy. Soc. London} 157 (1866), 49-88.

\bibitem{McBo14} M.~McLeod and Y.~Bourgault. Mixed finite element methods for 
addressing multi-species diffusion using the Maxwell-Stefan equations. 
{\em Comput. Meth. Appl. Mech. Engrg.} 279 (2014), 515-535.

\bibitem{Ner89} W.~Nernst. Die elektromotorische Wirksamkeit der Ionen. 
{\em Z. Physikalische Chemie} 4 (1889), 129-181.

\bibitem{PDBLGM11} K.~Peerenboom, J.~van Dijk, J.~Boonkkamp, L.~Liu, W.~Goedheer, and 
J.~van der Mullen. Mass conservative finite volume discretization of the continuity 
equations in multi-component mixtures. {\em J. Comput. Phys.} 230 (2011), 3525-3537.

\bibitem{Pla90} M.~Planck. \"Uber die Potentialdifferenz zwischen zwei verd\"unnten 
L\"osungen bin\"arer Electrolyte. {\em Annalen der Physik} 276 (1890), 561-576.

\bibitem{PsFa11} S.~Psaltis and T.~Farrell. Comparing charge transport predictions
for a ternary electrolyte using the Maxwell--Stefan and Nernst--Planck
equations. {\em J. Electrochem. Soc.} 158 (2011), A33-A42.

\bibitem{SaSo18} F.~Salvarani and J.~Soares. On the relaxation of the
Maxwell--Stefan system to linear diffusion. Submitted for publication, 2018.
hal-01791067.

\bibitem{Ste71} J.~Stefan. \"Uber das Gleichgewicht und Bewegung, insbesondere die 
Diffusion von Gasgemengen. {\em Sitzungsberichte Kaiserl. Akad. Wiss. Wien} 
63 (1871), 63-124.

\bibitem{WeKr00} J.~Wesselingh and R.~Krishna. {\em Mass Transfer in Multicomponent 
Mixtures}. Delft University Press, Delft, 2000.

\end{thebibliography}
\end{document}